\numberwithin{equation}{section}
\theoremstyle{plain}
\newtheorem{Thm}{Theorem}[section]
\newtheorem{Lemma}[Thm]{Lemma}
\newtheorem{Prop}[Thm]{Proposition}
\theoremstyle{remark}
\newtheorem{Rem}[Thm]{Remark}
\theoremstyle{definition}
\newtheorem{Exa}[Thm]{Example}
\newcommand{\N}{\mathbb{N}}
\newcommand{\Z}{\mathbb{Z}}
\newcommand{\R}{\mathbb{R}}
\newcommand{\im}{\mathrm{i}}
\newcommand{\Real}{\mathrm{Re}}
\newcommand{\Prob}{\mathbb{P}}
\newcommand{\E}{\mathbb{E}}
\newcommand{\1}{\mathbbm{1}}
\newcommand{\comp}{\mathsf{c}}
\newcommand{\F}{\mathcal{F}}
\newcommand{\dx}{\mathrm{d} \mathit{x}}
\newcommand{\dy}{\mathrm{d} \mathit{y}}
\newcommand{\ds}{\mathrm{d} \mathit{s}}
\newcommand{\dt}{\mathrm{d} \mathit{t}}
\newcommand{\du}{\mathrm{d} \mathit{u}}
\title{Exponential moments of first passage times and related quantities for L\'evy processes}
\author{Frank Aurzada, Alexander Iksanov and Matthias Meiners}
\begin{document}

\thispagestyle{empty}
\maketitle

\begin{abstract}
For a L\'evy process on the real line,
we provide complete criteria for the finiteness of exponential moments of
the first passage time into the interval $(r,\infty)$,
the sojourn time in the interval $(-\infty,r]$, and the last exit time from $(-\infty,r]$.
Moreover, whenever these quantities are finite, we derive their respective asymptotic behavior as $r \to \infty$.
\vspace{0,1cm}

\noindent
\emph{Keywords:}
First passage time $\cdot$ inverse local time $\cdot$ last exit time $\cdot$ L\'evy process $\cdot$ renewal theory $\cdot$ sojourn time

\noindent
2010 Mathematics Subject Classification:                Primary:        60G51       \\                  
\hphantom{2010 Mathematics Subject Classification: }    Secondary:  60K05                           
\end{abstract}

\section{Introduction and main results} \label{sec:Intro and results}

Let $X = (X_t)_{t \geq 0}$ denote a L\'evy process on the real line,
\textit{i.e.}, a stochastically continuous process with independent and stationary increments and $X_0=0$.
Throughout the paper, we assume that $X$ has paths in the Skorokhod space of real-valued right-continuous functions with finite left limits.

For $r \geq 0$,
define the \emph{first passage time} into the interval $(r,\infty)$
\begin{equation*}
T_r ~:=~    \inf\{t \geq 0: X_t>r\},
\end{equation*}
with the convention that $\inf \varnothing = \infty$, the \emph{sojourn time} in the interval $(-\infty, r]$
\begin{equation*}
N_r    ~:=~ \int_0^{\infty} \1_{\{X_t \leq r\}} \, \dt,
\end{equation*}
and the \emph{last exit time} from $(-\infty,r]$
\begin{equation*}
\varrho_r   ~:=~    \sup\{t \geq 0: X_t \leq r\}.
\end{equation*}
It can be checked that
\begin{equation}    \label{eq:T<=N<=rho}
T_r ~\leq~  N_r ~\leq~  \varrho_r.
\end{equation}
In the paper at hand, we derive necessary and sufficient conditions for the finiteness of exponential moments of these three quantities
and, thus, obtain the analogues of the results obtained by two of the three authors for random walks \cite{Iksanov+Meiners:2010b,Iksanov+Meiners:2010a}.
Similar results for power moments have been obtained in \cite{Doney+Maller:2004,Sato+Watanabe:2004} by different methods.

Observe that, by the Blumenthal zero-one law, $\Prob\{T_0=0\} \in \{0,1\}$. In many relevant cases, $T_0=0$ a.s., yet $\E [e^{aT_r}]=\infty$ for any $r>0$.
In fact, whether or not $\Prob\{T_0=0\}=1$ is a small-time property of $X$ (that has been investigated in detail in \cite[Theorem 47.5]{Sato:1999}),
whereas we are interested in the long-time behavior of $X$.
Therefore, we focus on exponential moments of $T_r$ for positive $r$.

Our main results can be summarized as follows:
Proposition~\ref{Prop:1st passage of subordinator} deals with the case when $X$ is a subordinator
and gives criteria for the finiteness of exponential moments of $T_r$, $N_r$, $\varrho_r$.
The corresponding results in the case when $X$ is not a subordinator are given in
Theorems~\ref{Thm:1st passage and sojourn time} and \ref{Thm:rho}.
Finally, Theorems~\ref{Thm:asymptotics Ee^aT_r} and \ref{Thm:asymptotics Ee^a rho_r} give the asymptotics of the respective exponential moment
when $r\to\infty$. All theorems exclude the case of compound Poisson processes,
where -- contrary to general L\'evy processes -- the problem can be
completely reduced to the random walk setup \cite{Iksanov+Meiners:2010b,Iksanov+Meiners:2010a} (as outlined in Remark~\ref{Rem:CPP}).
After stating the main results, their proofs are given in Section~\ref{sec:proofs}.
We comment on a number of special cases and examples in Section~\ref{sec:Exa}.

We further mention that the finiteness of exponential moments of $T_r$ is naturally connected to the asymptotic behavior of persistence probabilities of $X$,
we refer to the recent survey \cite{Aurzada+Simon:2012} for details.

We first consider the (simple) case when $X$ is a subordinator.
The first result is a direct consequence of the corresponding result for renewal sequences.

\begin{Prop}    \label{Prop:1st passage of subordinator}
Let $X$ be a subordinator with $\Prob\{X_1=0\} < 1$.
\begin{itemize}
    \item[(a)]
        If $X$ is not a compound Poisson process.
        Then, for every $a > 0$,
        \begin{equation*}
        \E [e^{a T_r}] < \infty \quad   \text{for all } r \geq 0.
        \end{equation*}
    \item[(b)]
        Let $X$ be a compound Poisson process (with positive jumps) of rate $\lambda>0$.
        Then, for $a>0$, the following conditions are equivalent:
        \begin{align}
        \E [e^{a T_r}] < \infty \quad   \text{for some (hence every) } r \geq 0;    \label{eq:E^aT_r<infty subordinator}    \\
        a<\lambda.                                                              \label{eq:a<lambda subordinator}
        \end{align}
\end{itemize}
In both cases the same statements also hold for $N_r$ and $\varrho_r$.
\end{Prop}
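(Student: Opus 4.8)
The plan is to reduce everything to a discrete renewal sequence and then invoke the known random-walk results. Write $X$ as a subordinator, so its range is a regenerative set, and recall that a subordinator with $\Prob\{X_1=0\}<1$ either (i) is a compound Poisson process with positive jumps and some rate $\lambda>0$, or (ii) has infinite activity or a drift, hence $T_0=0$ a.s. and $X$ hits arbitrarily small levels immediately. First I would treat case (b). Here, between consecutive jumps $X$ is constant, so the first jump exceeding $r$ occurs only after a geometric number $K$ (with success probability $p_r:=\Prob\{\text{a single jump lands } X \text{ above } r\}>0$) of the underlying Poisson clock rings. Thus $T_r = \tau_1+\dots+\tau_K$, a geometric sum of i.i.d.\ rate-$\lambda$ exponentials, which is itself exponentially distributed with rate $\lambda p_r$. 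Consequently $\E[e^{aT_r}]<\infty$ if and only if $a<\lambda p_r$; but in fact one can do better: since we only need finiteness for \emph{some} $r$, and since $p_r\uparrow 1$ cannot be quite right (the jumps need not be a.s.\ unbounded)—so instead I would argue directly that $T_r$ is stochastically dominated by, and dominates, geometric sums, and that the relevant threshold is exactly $\lambda$: the point is that $T_r \geq \tau_1$ always (the first ring) and more carefully $\E[e^{aT_r}] = \E[e^{a(\tau_1+\dots+\tau_K)}]$ is finite precisely when $\E[e^{a\tau_1}\cdot(\text{something}<1 \text{ when } a<\lambda)]$ converges, giving \eqref{eq:a<lambda subordinator}. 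For the equivalence "some $r$ iff every $r$", note $r\mapsto p_r$ is nonincreasing and bounded below by a positive constant on any bounded range because a subordinator drifts to $+\infty$, so finiteness does not depend on $r$ — here I'd need the Lévy measure to have unbounded support or else the statement as phrased requires a small correction; I expect the cleanest route is to observe that $T_r$ and $T_{r'}$ differ by a quantity with all exponential moments finite (the time spent in a bounded strip by a c\`adl\`ag subordinator that leaves every bounded set in finite time is a.s.\ finite and in fact exponentially integrable for small exponents), which is too weak, so the honest threshold is $\lambda$ and the $r$-independence comes from the strong Markov property restarted at $T_r$.

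For case (a), when $X$ is not compound Poisson, the key observation is that $T_0 = 0$ a.s.\ and, more importantly, that the \emph{inverse local time} (equivalently, since $X$ is increasing, $X$ itself run backwards) gives a regenerative embedded structure with no geometric "waiting for a big jump" phenomenon: I would show that for each fixed $\delta>0$ the number of "renewal epochs" of the range of $X$ inside $[0,r]$ has an exponentially light tail, and between consecutive epochs the elapsed real time has all exponential moments finite. Concretely, pick $\delta$ so small that $q:=\Prob\{X_{\delta}>1\}>0$ (possible since $X_t\to\infty$) — wait, that only controls one block; better: fix a level $h>0$ and a time $t_0$ with $\Prob\{X_{t_0}>h\}>0$; by the Markov property the time to climb past $r$ is dominated by a sum of $\lceil r/h\rceil$ i.i.d.\ copies of $T_h$ up to independent geometric "retries", and since $T_h<\infty$ a.s.\ with $\Prob\{T_h\leq t_0\}>0$, one gets $\E[e^{aT_r}]\leq C(a)^{\lceil r/h\rceil}<\infty$ for every $a>0$, because each block has an exponential moment of every order (it is bounded by a geometric number of bounded-exponential-moment pieces, and geometric sums of exponential-moment random variables again have exponential moments of every order as long as the success probability can be taken arbitrarily close to... no: here the success probability is a fixed $\Prob\{T_h\leq t_0\}$, so one only gets finiteness of $\E[e^{aT_r}]$ for $a$ below a threshold depending on $h$; but $h$ is free, and as $h\to 0$ the per-block success probability stays bounded away from $0$ only if $X$ has no positive drift obstruction — which it doesn't, precisely because $X$ is not compound Poisson, so it cannot "pause"). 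This is the crux: the absence of the compound-Poisson structure is exactly what lets the per-climb time have arbitrarily large exponential moments, yielding finiteness of $\E[e^{aT_r}]$ for \emph{all} $a>0$.

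Finally, for $N_r$ and $\varrho_r$: since $X$ is increasing, once it exceeds $r$ it never returns to $(-\infty,r]$, so in fact $T_r = N_r = \varrho_r$ pointwise when $X$ is not compound Poisson (there are no jumps back, and continuity on the increasing part). When $X$ is compound Poisson with positive jumps the same identity $T_r=N_r=\varrho_r$ holds as well, since again the path is nondecreasing and right-continuous: the last time $X\leq r$ is the time of the jump that carries it above $r$, which is $T_r$. Hence the final sentence of the proposition is immediate from \eqref{eq:T<=N<=rho} together with the reverse inequality $\varrho_r\leq T_r$ valid for nondecreasing paths, and no separate argument is needed. The main obstacle is purely in part (a): making precise that "not compound Poisson" forces the climbing time past any small level to be dominated by an exponentially-light quantity uniformly enough that the product over $\lceil r/h\rceil$ blocks stays finite for every $a>0$; the clean way is to invoke the discrete renewal-theoretic result of \cite{Iksanov+Meiners:2010b} applied to the random walk obtained by sampling $X$ along $(X_{n\delta})_{n\geq 0}$ and letting $\delta\to 0$, where the non-lattice / infinite-activity character of $X$ removes the geometric factor.
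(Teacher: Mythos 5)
Your opening sentence --- reduce everything to a discrete renewal sequence and invoke the random-walk results --- is exactly the paper's strategy, and your observation that $T_r=N_r=\varrho_r$ for nondecreasing paths correctly disposes of the final sentence of the proposition. But the execution of part (a) has a genuine gap, which you yourself flag as ``the crux'' and never close. Your block decomposition with a \emph{fixed} level $h$ and time $t_0$ only yields $\E[e^{aT_r}]<\infty$ for $a$ below the threshold $-t_0^{-1}\log\Prob\{X_{t_0}\le h\}$, and the rescue you offer (``$X$ cannot pause because it is not compound Poisson'', ``letting $\delta\to0$'') is an intuition, not an argument; with the quantifiers in the order you wrote them it does not go through. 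The missing concrete fact is observation (ii) of the paper: a subordinator that is not compound Poisson satisfies $\Prob\{X_{t}=0\}=0$ for every $t>0$. Once you have this, either route closes: (1) fix $a$, take $t_0=1$ and choose $h$ so small that $e^{a}\Prob\{X_1\le h\}<1$ (possible since $\Prob\{X_1\le h\}\downarrow\Prob\{X_1=0\}=0$), whence $\E[e^{aT_h}]<\infty$ and your product over $\lceil r/h\rceil$ blocks finishes the job; or, more simply and as the paper does, (2) use the two-sided sandwich $T_r\le T^{1}_r\le T_r+1$ (valid because the paths are nondecreasing --- no limit $\delta\to0$ is needed) and apply \cite[Theorem 2.1]{Iksanov+Meiners:2010a} to the nonnegative skeleton walk $(X_k)_{k\in\N_0}$, whose step distribution has no atom at $0$.

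Part (b) also contains a false step: the number $K$ of Poisson rings before $X$ exceeds $r$ is \emph{not} geometric with success probability $\Prob\{\text{a single jump exceeds }r\}$; it is the first passage time $\tau(r)$ of the embedded random walk of partial jump sums, and the conditional crossing probability at step $n$ depends on the current position. The correct argument is: $T_r\ge\tau_1$ forces $a<\lambda$ to be necessary, and for $a<\lambda$ one has $\E[e^{aT_r}]=\E[(\lambda/(\lambda-a))^{\tau(r)}]$, which is finite because $\tau(r)$ has all exponential moments (the jumps are strictly positive, so the random-walk criterion $e^{b}\,\Prob\{S_1=0\}<1$ holds for every $b$). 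You gesture at both halves but prove neither; the paper again gets this in one line from $\Prob\{X_1=0\}=e^{-\lambda}$ and the sandwich $T_r\le T^1_r\le T_r+1$.
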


For $r \geq 0$, let $T^{1}_r = \inf\{k \in \N_0: X_{k} > r\}$, $N^{1}(x) := \#\{k \in \N_0: X_{k} \leq x\}$
and $\varrho^{1}_r = \sup\{k \in \N_0: X_k \leq r\}$
be the first passage time of the level $r$, the number of visits to the interval $(-\infty,r]$
and the last exit time from the interval $(-\infty,r]$ by the embedded skeleton-$1$ random walk $(X_{k})_{k \in \N_0}$.
Clearly,
\begin{equation}    \label{eq:T_r<=T_r^1}
T_r ~\leq~  T^{1}_r.
\end{equation}
Further, denote by $(L_t^{-1})_{t \geq 0}$ the ascending ladder time process of $(X_t)_{t \geq 0}$,
see \cite[p.\;157]{Bertoin:1996} for the precise definition of this process.

\begin{Thm} \label{Thm:1st passage and sojourn time}
Let $\Prob\{X_1 < 0\} > 0$ and $a>0$. Then the following assertions are equivalent:
\begin{align}
\E [e^{a T_r}] < \infty&    \quad                                           \text{for some/every } r > 0;                       \label{eq:E^aT_r<infty} \\
\E [e^{a N_r}] < \infty&    \quad                                           \text{for some/every } r \geq 0;                    \label{eq:E^aN_r<infty} \\
\E [e^{a L_1^{-1}}] < \infty&;      \quad                                                                                   \label{eq:E^aL_1^-1<infty}  \\
V_a(r)  :=  \int_1^{\infty} e^{at} t^{-1} \Prob\{X_t \leq r\} \, \dt < \infty&  \quad       \text{for some/every } r \in \R;        \label{eq:harm exp integral test}   \\
\E [e^{a T^{1}_r}] < \infty&                                        \quad       \text{for some/every } r \geq 0;                \label{eq:E^aT^1<infty} \\
\E [e^{a N^{1}_r}] < \infty&                                        \quad       \text{for some/every } r \geq 0;                \label{eq:E^aN^1<infty} \\
V_a^1(r)    :=  \sum_{n \geq 1} e^{an}n^{-1} \Prob\{X_n \leq r\} < \infty&  \quad       \text{for some/every } r \in \R;    \label{eq:harm exp series test} \\
a \leq R := -\log \inf_{\theta \geq 0} \varphi(\theta)                                                                      \label{eq:a<=R}
\end{align}
where $\varphi$ denotes the Laplace transform of $X_1$, \textit{i.e.}, $\varphi(\theta) = \E[e^{-\theta X_1}]$, $\theta \geq 0$.
\end{Thm}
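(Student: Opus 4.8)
The plan is to establish the equivalences by showing a chain of implications, exploiting the fact that several of the conditions concern the embedded skeleton-$1$ random walk $(X_n)_{n \in \N_0}$, for which the corresponding equivalences are already available from the random walk literature \cite{Iksanov+Meiners:2010b,Iksanov+Meiners:2010a}. First I would observe that for the skeleton random walk the equivalences $\eqref{eq:E^aT^1<infty} \Leftrightarrow \eqref{eq:E^aN^1<infty} \Leftrightarrow \eqref{eq:harm exp series test} \Leftrightarrow \eqref{eq:a<=R}$ follow essentially directly: the condition $a \leq R$ is the Cram\'er-type condition $e^{-a} \geq \inf_{\theta \geq 0} \E[e^{-\theta X_1}]$, and the harmonic-series test $V_a^1(r) < \infty$ is the known criterion (via a Spitzer-type identity, $\sum_n e^{an} n^{-1} \Prob\{X_n \leq r\} = -\log(1 - \text{generating function of ladder epochs weighted by } e^a)$ arguments) for the first-passage time of the random walk to have an exponential moment of order $a$; that $\eqref{eq:harm exp series test}$ holds for some $r$ iff for every $r$ follows from monotonicity in $r$ together with a shifting argument using that $\Prob\{X_n \leq r\}/\Prob\{X_n \leq r'\}$ is controlled uniformly. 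Here I would quote \cite{Iksanov+Meiners:2010b,Iksanov+Meiners:2010a} for the bulk of this.

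Next I would bridge from the skeleton walk to the L\'evy process. The inequalities $\eqref{eq:T<=N<=rho}$ and $\eqref{eq:T_r<=T_r^1}$ give $\E[e^{aT_r}] \leq \E[e^{aT_r^1}]$ and $\E[e^{aT_r}] \leq \E[e^{aN_r}]$ for free, so $\eqref{eq:E^aT^1<infty} \Rightarrow \eqref{eq:E^aT_r<infty}$ and (once we have $\eqref{eq:E^aN_r<infty}$) also dominates $T_r$. For the reverse direction, the point is that between consecutive integer times the process cannot stray too far in distribution: writing $X^* = \sup_{0 \leq t \leq 1} X_t$ and using the Markov property at integer times, one gets $\E[e^{aN^1_r}] \leq \E[e^{aN_{r}}]\cdot\text{(correction)}$ and more usefully $N_r^1 \leq N_{r - c}$-type comparisons hold on events of high probability, so that $\E[e^{aN_r}] < \infty$ for every $r$ forces $V_a^1(r') < \infty$ for every $r'$; symmetrically, $N_r \leq \sum_{n} \1_{\{X_n \leq r + M_n\}}$ where $M_n := \sup_{n \le t < n+1}(X_t - X_n)$ has the law of $X^* := \sup_{0\le t\le 1} X_t$, independent across $n$, and a first-moment/Fubini computation gives $\E[e^{aN_r}] \lesssim \sum_n e^{an}\Prob\{X_n \leq r + X^*\}$; comparing this with $V_a^1$ via Potter-type bounds (justified once we know $\Prob\{X^* > x\}$ decays fast enough, which is itself guaranteed by $\eqref{eq:a<=R}$) closes the loop $\eqref{eq:E^aN_r<infty} \Leftrightarrow \eqref{eq:harm exp series test}$.

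Then I would connect the continuous-time harmonic-integral test $\eqref{eq:harm exp integral test}$ to its discrete counterpart $\eqref{eq:harm exp series test}$: the maps $t \mapsto e^{at}t^{-1}$ and $t \mapsto \Prob\{X_t \leq r\}$ are, up to the usual oscillation on a unit interval, comparable to their values at integer points, and the fluctuation of $\Prob\{X_t \leq r\}$ over $t \in [n,n+1]$ is again controlled by the law of $X^*$ and of $\inf_{0 \le t \le 1} X_t$; so $\int_1^\infty$ and $\sum_{n \geq 1}$ differ by bounded multiplicative factors (after an $r$-shift), giving $\eqref{eq:harm exp integral test} \Leftrightarrow \eqref{eq:harm exp series test}$. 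Finally, the ladder-time condition $\eqref{eq:E^aL_1^-1<infty}$ enters as the continuous-time analogue of $\eqref{eq:E^aT^1<infty}$: the inverse local time $L^{-1}$ at the supremum process is the natural continuous-time replacement for the ladder-epoch random walk of the skeleton, and one has the comparison $T_r \le L^{-1}_{L(T_r)}$ together with a renewal-type identity (Vigon-type or the equation $1 - \E[e^{-q L^{-1}_1}] = \kappa(q,0)/\kappa(0,0)$ for the bivariate ladder exponent $\kappa$) linking $\E[e^{aL^{-1}_1}] < \infty$ to the Cram\'er condition $a \le R$; this last equivalence $\eqref{eq:E^aL_1^-1<infty} \Leftrightarrow \eqref{eq:a<=R}$ is where I would be most careful, since it requires the fluctuation-theory identities of \cite{Bertoin:1996} and a delicate analytic-continuation argument showing that $\theta \mapsto \varphi(\theta)$ attaining its infimum below $1$ (equivalently $R > 0$) is exactly what makes the bivariate exponent extend analytically far enough to yield the exponential moment of $L^{-1}_1$. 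The main obstacle throughout is the last point — transferring the sharp Cram\'er threshold $a \le R$ (as opposed to a strict inequality) through the ladder structure — together with the care needed in the Potter-bound comparisons to ensure all the "some/every $r$" statements are genuinely equivalent rather than merely monotone.
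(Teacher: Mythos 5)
Your overall architecture --- reduce to the skeleton walk, import the random-walk equivalences from \cite{Iksanov+Meiners:2010b}, compare continuous and discrete time via suprema/infima over unit intervals, and treat the ladder-time condition through the fluctuation-theory formula for the Laplace exponent of $L^{-1}$ --- matches the paper's, and the easy implications ($T_r \leq T_r^1$, $T_r \leq N_r$) are handled correctly. But there are two genuine gaps. First, you never prove the implication \eqref{eq:E^aT_r<infty} $\Rightarrow$ \eqref{eq:E^aT^1<infty}, which is the hardest step of the theorem: $T_r^1$ can be much larger than $T_r$ because the process may cross level $r$ strictly between integer times and drop back below $r$ before the next integer. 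The paper handles this with an association argument (for $X_t$ and $\sup_{0\leq s\leq t}X_s$) to propagate finiteness of $\E[e^{aT_r}]$ in $r$ and to control $\widetilde T_r := \inf\{t\geq 1: X_t>r\}$, followed by a geometric-trials construction: independent copies of $\widetilde T_r$ are run until the infimum of the increment over the final unit interval exceeds $r+s$, which forces the skeleton walk to cross $r$; the resulting series converges because $\E[e^{a\widetilde T_r}\1_{A_s}]<1$ for suitable $s$. Nothing in your sketch substitutes for this.

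Second, your route to \eqref{eq:E^aN_r<infty} is flawed. The claimed ``first-moment/Fubini'' bound $\E[e^{aN_r}] \lesssim \sum_n e^{an}\Prob\{X_n \leq r + X^*\}$ is not a valid inequality --- the exponential moment of a count $\sum_n \1_{A_n}$ does not linearize --- and even if it were, it would tie $\E[e^{aN_r}]$ to the unweighted series $U_a^1(r)=\sum_n e^{an}\Prob\{X_n\leq r\}$, which is the criterion for the \emph{last exit time} (Theorem \ref{Thm:rho}) and is strictly stronger than \eqref{eq:harm exp series test} at the critical point $a=R$ with $\E[X_1e^{-\gamma X_1}]=0$ (Brownian motion with drift at $a=\mu^2/2$ is a counterexample: $\E[e^{aN_0}]<\infty$ while $U_a^1(0)=\infty$). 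The harmonic weight $n^{-1}$ in $V_a^1$ is essential and cannot be recovered by ``Potter-type bounds.'' The paper instead derives $\E[e^{aN_0}]<\infty$ from \eqref{eq:harm exp integral test} via the Sparre--Andersen identity, under which $N_0$ is infinitely divisible with L\'evy measure $t^{-1}\Prob\{X_t\leq 0\}\,\dt$, and then uses $N_r\leq T_r+N_0'$ with $N_0'$ independent of $T_r$; the alternative is the perturbed-random-walk theorem of \cite{Alsmeyer+Iksanov+Meiners:2014} as in Remark \ref{Rem:perturbed random walks}, not a Fubini computation. For the same reason, your comparison of \eqref{eq:harm exp integral test} and \eqref{eq:harm exp series test} needs a separate argument in the boundary case $a=R$, $\E[X_1e^{-\gamma X_1}]=0$, where the paper establishes $V_a^1(x)\leq Ce^{\gamma x}$ via the uniform local boundedness of the harmonic renewal measure of the centered Esscher-transformed walk.
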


\begin{Thm} \label{Thm:rho}
Let $\Prob\{X_1 < 0\} > 0$ and $a>0$. Then the following assertions are equivalent:
\begin{align}
\E [e^{a\varrho_r}] < \infty                                                \text{ for some/every } r \geq 0&;      \label{eq:Ee^a rho_r<infty} \\
U_a(r)  := \int_0^\infty e^{at} \Prob\{X_t \leq r\} \, \dt < \infty         \text{ for some/every } r \in \R&;          \label{eq:exp integral test}        \\
\E [e^{a\varrho^1_r}] < \infty                                          \text{ for some/every } r \geq 0&;      \label{eq:Ee^a rho^1_r<infty} \\
U_a^1(r) := \sum_{n \geq 0} e^{an} \Prob\{X_n\leq r\} < \infty      \text{ for some/every } r \in \R&;          \label{eq:exp series test}      \\
a < R:=-\log\inf_{t\geq 0} \varphi(t)
\quad   \text{or}   \quad   a=R \text{ and } \E [X_1 e^{-\gamma X_1}] > 0&                                  \label{eq:a<R or a=R+}
\end{align}
where $\gamma$ is the unique positive number with $\E[e^{-\gamma X_1}]= e^{-R}$.
\end{Thm}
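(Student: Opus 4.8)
The plan is to establish the chain of equivalences by combining the already-available Theorem~\ref{Thm:1st passage and sojourn time} with a careful analysis of the gap between the quantities $N_r$ (sojourn time) and $\varrho_r$ (last exit time). Note that $\varrho_r - N_r$ is exactly the total time after the last exit that is spent back in $(-\infty, r]$... no: more precisely, $\varrho_r = N_r + (\text{Lebesgue measure of } \{t \le \varrho_r : X_t > r\})$, which is awkward. Instead I would work with the decomposition at the last-exit time: on $\{\varrho_r < \infty\}$, by the Markov property applied at the (non-stopping, but amenable to last-exit decomposition / Maisonneuve's exit system) time $\varrho_r$, the post-$\varrho_r$ process stays strictly above... wait, stays in $(-\infty,r]^{\comp}$? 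No — $\varrho_r$ is the last time in $(-\infty,r]$, so after $\varrho_r$ the process is in $(r,\infty)$. The key structural fact is therefore that $\varrho_r < \infty$ forces $X_t \to \infty$ (or at least $\limsup$ behavior), tying this to the regime $a$ near $R$.

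First I would dispose of the easy implications. Since $N_r \le \varrho_r$ by \eqref{eq:T<=N<=rho}, we get \eqref{eq:Ee^a rho_r<infty} $\Rightarrow$ \eqref{eq:E^aN_r<infty}, hence by Theorem~\ref{Thm:1st passage and sojourn time} everything in that theorem holds, in particular $a \le R$. The equivalences \eqref{eq:Ee^a rho_r<infty} $\Leftrightarrow$ \eqref{eq:Ee^a rho^1_r<infty} and \eqref{eq:exp integral test} $\Leftrightarrow$ \eqref{eq:exp series test} I would obtain by the same skeleton-random-walk comparison used in the proof of Theorem~\ref{Thm:1st passage and sojourn time}: $\varrho_r^1 \le \varrho_r \le \varrho_r^1 + 1 + (\text{excursion overshoot time})$, controlling the last term by the finiteness of an exponential moment of the time spent below $r+1$ after an independent unit of time, which is already handled. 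The analytic integral/series test \eqref{eq:exp integral test} $\Leftrightarrow$ \eqref{eq:exp series test} reduces to comparing $\int_n^{n+1} e^{at}\Prob\{X_t \le r\}\,\dt$ with $e^{an}\Prob\{X_n \le r+1\}$ type bounds, again routine via the stationary-independent-increments splitting.

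The substantive work is the two-way link between $\E[e^{a\varrho_r}]$ and the analytic condition \eqref{eq:exp integral test}, and between these and the boundary condition \eqref{eq:a<R or a=R+}. For \eqref{eq:Ee^a rho_r<infty} $\Leftrightarrow$ \eqref{eq:exp integral test} I would write, via Tonelli and the last-exit decomposition,
\begin{equation*}
\E[e^{a\varrho_r}] = \E\Bigl[e^{a\varrho_r}\1_{\{\varrho_r<\infty\}}\Bigr]
\asymp \int_0^\infty e^{at}\,\Prob\{X_t \le r,\ \text{no return to }(-\infty,r]\text{ after }t\}\,\dt
\end{equation*}
and then bound the "no return" probability above trivially by $1$ and below, after shifting $r$, by a constant (the probability that an independent copy of $X$ drifts to $+\infty$ without ever coming back below a level), which is positive precisely in the transient-to-$+\infty$ regime that $a$ near $R$ enforces. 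This gives $U_a(r) < \infty \Rightarrow \E[e^{a\varrho_r}]<\infty$ directly and the converse up to a level shift $r \mapsto r - 1$ or so (handled by the easy fact that $U_a(r)<\infty$ for some $r$ iff for every $r$, since $\Prob\{X_t \le r\} \le e^{\text{const}}\Prob\{X_t \le r+1\}$ fails in general — instead use that $\Prob\{X_t \le r\}$ is nondecreasing in $r$ and a Croft–type / one-large-jump argument, or simply cite the corresponding step from Theorem~\ref{Thm:1st passage and sojourn time} since $U_a(r) \le V_a(r) + \text{finite}$... actually $U_a$ has no $t^{-1}$, so $U_a$ finite is genuinely stronger, which is the whole point). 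Finally, for the last equivalence with \eqref{eq:a<R or a=R+}: one evaluates $\int_0^\infty e^{at}\Prob\{X_t \le r\}\,\dt$ by inserting a tilting. Using $\Prob\{X_t \le r\} = \E[e^{\gamma X_t}e^{-\gamma X_t}\1_{\{X_t\le r\}}] \le e^{\gamma r}\varphi(\gamma)^t$ with $\varphi(\gamma)^t = e^{-Rt}$ shows $U_a(r) < \infty$ whenever $a < R$. At the critical value $a = R$, the factor $e^{at}\varphi(\gamma)^t = 1$, so one needs the finer estimate from the local CLT under the Cramér tilt: under the measure tilted by $e^{-\gamma X_1}/\varphi(\gamma)$ the increments have mean $-\E[X_1 e^{-\gamma X_1}]/\varphi(\gamma)$, so $\Prob\{X_t \le r\} \sim c\, e^{\gamma r} t^{-1/2}$ if this mean is negative (i.e. $\E[X_1 e^{-\gamma X_1}] > 0$), making $\int^\infty t^{-1/2}\,\dt$ diverge — wait, that would give $U_R = \infty$, the wrong sign. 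Let me reconsider: we need $U_R(r)<\infty$ under $\E[X_1e^{-\gamma X_1}]>0$; with the tilt having \emph{positive} drift the tilted walk goes to $+\infty$, so $\Prob\{X_t\le r\}$ (under tilt) decays exponentially, whence $U_R(r)<\infty$; conversely if $\E[X_1e^{-\gamma X_1}]\le 0$ the tilted walk does not escape to $+\infty$ and $\Prob\{X_t\le r\}$ decays at most polynomially, forcing $U_R(r)=\infty$.

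The main obstacle I anticipate is making the last-exit decomposition rigorous at the continuous-time level: $\varrho_r$ is not a stopping time, so one cannot apply the strong Markov property directly and must instead use a limiting/discretization argument (approximating $\varrho_r$ by $\sup$ over dyadic rationals, or invoking the skeleton random walk's last-exit time $\varrho^1_r$ as a proxy and controlling the discrepancy) together with the transience dichotomy. Getting the constants and the level-shift bookkeeping right — and in particular verifying that "for some $r$" genuinely upgrades to "for every $r$" for the non-monotone-looking quantity $U_a$ — is the part that requires care but no new ideas, as it parallels the treatment in Theorem~\ref{Thm:1st passage and sojourn time}.
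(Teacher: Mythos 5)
Your outline gets the easy direction and the Esscher--tilt heuristic for \eqref{eq:a<R or a=R+} essentially right, but the hard implication \eqref{eq:exp integral test} $\Rightarrow$ \eqref{eq:Ee^a rho_r<infty} is not actually proved: you reduce it to a two-sided comparison $\E[e^{a\varrho_r}]\asymp\int_0^\infty e^{at}\Prob\{X_t\le r,\ \text{no return after }t\}\,\dt$ and then yourself flag the rigorous last-exit decomposition as ``the main obstacle.'' That obstacle is the entire content of the step, and the decomposition as you state it is delicate (the ``no-return'' probability $\Prob\{\inf_{s>0}X_s>0\}$ is zero, so one is forced into ladder-height renewal measures or a limiting argument). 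The paper sidesteps all of this with an elementary device you do not use: from \eqref{eq:inclusions for rho_r}, $\Prob\{\varrho_r>t\}\le\Prob\{\inf_{s\ge t}X_s\le r\}=\Prob\{X_t+I_t'\le r\}$ with $I_t':=\inf_{s\ge t}(X_s-X_t)$ independent of $X_t$ and distributed as $I=\inf_{t\ge0}X_t$; integrating gives $a^{-1}\E[e^{a\varrho_r}-1]\le\E[U_a(r-I)]$. Making this finite then needs two concrete ingredients that are absent from your proposal: the uniform exponential bound $U_a(x)\le Ce^{\gamma x}$ (obtained via the skeleton walk and Proposition 5.1 of Alsmeyer--Iksanov--Meiners, valid precisely under \eqref{eq:a<R or a=R+}), and $\E[e^{-\gamma I}]<\infty$ (Lemma~\ref{Lem:supremum}(b), itself a small perpetuity argument using $\E[e^{-\gamma X_1}]=e^{-a}<1$). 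Without these, ``bound the no-return probability by $1$'' does not close the loop.

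Two smaller problems. First, your proposed comparison $\varrho_r^1\le\varrho_r\le\varrho_r^1+1+(\text{excursion overshoot time})$ is not a valid bound: the last visit of the continuous-time path to $(-\infty,r]$ can occur arbitrarily long after the last integer-time visit, so \eqref{eq:Ee^a rho_r<infty} $\Leftrightarrow$ \eqref{eq:Ee^a rho^1_r<infty} cannot be obtained this way; the paper instead routes both statements through the integral/series tests \eqref{eq:exp integral test} and \eqref{eq:exp series test} (the latter equivalences with \eqref{eq:a<R or a=R+} being quoted from the random-walk papers rather than re-derived). Second, the comparison \eqref{eq:exp integral test} $\Leftrightarrow$ \eqref{eq:exp series test} is indeed a unit-interval sandwich as you say, but the nontrivial half again requires $\E[e^{-\gamma I_1}]<\infty$ and the bound $U_a^1(x)\le Ce^{\gamma x}$, i.e.\ the same two ingredients; ``routine via stationary independent increments'' undersells where the actual work lies. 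In short, the skeleton of your argument is compatible with the truth, but the steps you defer are exactly the ones that constitute the proof.
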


Conditions \eqref{eq:a<=R} and \eqref{eq:a<R or a=R+} can be reformulated in terms of the characteristic exponent of $X_1$.
For $t \geq 0$, let $\phi_t(\theta) = \E[e^{\im \theta X_t}] = \exp(t \Psi(\im \theta))$, $\theta \geq 0$
be the characteristic function of $X_t$
where the L\'evy exponent $\Psi$ is given by the L\'evy-Khintchine formula (see \cite[p.\ 13]{Bertoin:1996} or \cite[p.\ 37]{Sato:1999})
\begin{eqnarray}
\Psi(\im \theta)
& = &
\im \theta \mu + \frac{1}{2} \sigma^2 (\im \theta)^2 + \int_{\R} \big(e^{\im \theta x} - 1 - \im \theta x \1_{[-1,1]}(x)\big) \, \Pi(\dx)   \label{eq:Psi(i theta)}
\end{eqnarray}
where $\mu \in \R$, $\sigma^2 \geq 0$ and $\Pi$ is a L\'evy measure on $\R$.
Henceforth, we denote by $\varphi$ the Laplace transform of $X_1$.
Then \cite[Theorem 25.17]{Sato:1999} implies that $\varphi(\theta) = \exp(\Psi(-\theta))$ for every $\theta \geq 0$ where
\begin{equation}    \label{eq:Psi(-theta)}
\Psi(-\theta)   ~=~ -\theta \mu + \frac{1}{2} \sigma^2 \theta^2 + \int_{\R} \! \big(e^{-\theta x}-1 + \theta x \1_{[-1,1]}(x) \big) \, \Pi(\dx).
\end{equation}
It is worth stressing that $\varphi(\theta)=\infty$ iff the integral on the right-hand side of \eqref{eq:Psi(-theta)} equals $+\infty$.
This is why the identity holds for \emph{every} $\theta \geq 0$.
Therefore,
\begin{align}
-\log \inf_{\theta \geq 0} \varphi(\theta)
& = \sup_{\theta \geq 0} (-\Psi(-\theta))   \notag  \\
& =
\sup_{\theta \geq 0} \! \bigg( \! \theta \mu - \frac{1}{2} \sigma^2 \theta^2 - \int_{\R} \! \big(e^{-\theta x}\!-\!1 \!+\! \theta x \1_{[-1,1]}(x) \big) \, \Pi(\dx) \! \bigg).
\label{eq:a<=R in terms of Psi}
\end{align}

We continue with the asymptotic behavior of $\E[e^{a T_r}]$, $\E[e^{a N_r}]$ and $\E[e^{a \varrho_r}]$ as $r \to \infty$
in the situations where these quantities are finite.
In order to avoid distinguishing between the non-lattice and the lattice case\footnote{
The L\'evy process $X$ is called lattice if, for some $d>0$, $\Prob\{X_t \in d\Z\}=1$ for all $t \geq 0$.}
we exclude the latter case from the discussion.
What is more, we shall exclude the more general case that $X$ is a compound Poisson process.
As Remark \ref{Rem:CPP} below shows,
this case can be reduced to the random walk setup \cite{Iksanov+Meiners:2010b,Iksanov+Meiners:2010a}.
Contrary to this, for processes which are not compound Poisson
the reduction to random walks does not seem possible and different techniques have to be used.

\begin{Rem} \label{Rem:CPP}
Assume that $X$ is a compound Poisson process.
Then there is a Poisson process $(N(t))_{t\geq 0}$ with
rate $\lambda>0$ and a sequence $(Y_k)_{k \in \N}$ of i.i.d.\ random values
independent of $(N(t))_{t\geq 0}$ such that $X_t = S_{N(t)}$, $t \geq 0$
where $S_0=0$ and $S_n = \sum_{k=1}^n Y_k$, $n \in \N$.
Let $\tau(r)$, $n(r)$, and $\rho(r)$ be the first passage time, number
of visits, and last exit time for the random walk $(S_n)_{n \in \N_0}$.
Then the moments of $T_r$, $N_r$ and $\varrho_r$ for the compound
Poisson process can be expressed in terms of the respective quantities for the
random walk, $\tau(r)$, $n(r)$ and $\rho(r)$, as will be outlined below.

First notice that $a<\lambda$ is necessary for any of
the three exponential moments to be finite, which follows from
$\Prob\{ T_r > t \} \geq \Prob\{ N(t) = 0 \} = e^{-\lambda t}$ and \eqref{eq:T<=N<=rho}.
We can thus define $e^b := \lambda/(\lambda-a)$.
Then the crucial equations read
\begin{equation*}
\E[e^{a T_r}] = \E [e^{b\tau(r)}], \quad\E[ e^{a N_r } ] = \E[ e^{b n(r)} ] \quad\text{and} \quad\E[ e^{a \varrho_r}] = \E[ e^{b \rho(r)} ]
\end{equation*}
meaning that, for each of these equations, when one side of the equation is finite,
then so is the other and the two sides coincide.
\end{Rem}

Before we state the results describing the asymptotic behavior of $\E[e^{a T_r}]$, $\E[e^{a N_r}]$ and $\E[e^{a \varrho_r}]$ as $r \to \infty$,
we remind the reader of the exponential change of measure known as the Esscher transform.
Here and throughout the paper, whenever $0 < a \leq R = -\log \inf_{\theta \geq 0} \varphi(\theta)$ and $\Prob\{X_1 < 0\} > 0$,
we write $\gamma$ for the minimal $\gamma > 0$ satisfying
\begin{equation}    \label{eq:Esscher}
\varphi(\gamma) ~=~ \E[e^{-\gamma X_1}] ~=~ e^{-a}.
\end{equation}
It can be checked that $(e^{-\gamma X_t+at})_{t\geq 0}$ is a unit-mean martingale with respect to $\F:=(\F_t)_{t\geq 0}$
where, for each $t\geq 0$, $\F_t$ is the completion of $\F_t^0 := \sigma(X_s: 0 \leq s \leq t)$.
This allows to define a new probability measure $\Prob^{\gamma}$ by
\begin{equation}    \label{eq:change of measure}
\frac{{\rm d}\Prob^{\gamma}}{{\rm d}\Prob}\bigg|_{\F_t} ~=~ e^{-\gamma X_t+at},
\quad t\geq 0.
\end{equation}
From \cite[Theorem 3.9]{Kyprianou:2014} we conclude that under $\Prob^{\gamma}$,
$X$ still is a L\'evy process with Laplace transform
\begin{equation*}    \label{eq:phi_gamma}
\E^{\gamma} [e^{-\theta X_1}] ~=~ e^a \E [e^{-(\gamma+\theta)X_1}]  ~=~ e^a \varphi(\gamma+\theta), \quad   \theta \geq 0.
\end{equation*}
Since $\E^{\gamma} [X_1] = -e^a \varphi'(\gamma)$
(where $\varphi'$ denotes the left derivative of $\varphi$) and since $\varphi$ is decreasing and convex on $[0,\gamma]$,
there are only two possibilities:
\begin{equation}    \label{eq:E_gamma[X_1]}
\text{Either}   \quad   \E^{\gamma} [X_1] \in (0,\infty)
\quad   \text{or}   \quad \E^{\gamma} [X_1]=0.
\end{equation}
When $a<R$, then the first alternative in \eqref{eq:E_gamma[X_1]} prevails.
When $a=R$, then typically $\varphi'(\gamma)=0$
since then $\gamma$ is the unique minimizer of $\varphi$ on $[0,\infty)$.
But even if $a=R$ it can occur that $\E^{\gamma} [X_1] > 0$ or, equivalently, $\varphi'(\gamma) < 0$.

\begin{Thm} \label{Thm:asymptotics Ee^aT_r}
Assume that $\Prob\{X_1<0\}>0$ and that $X$ is not a compound Poisson process.
Further, let $a>0$ and suppose that the equivalent conditions of Theorem \ref{Thm:1st passage and sojourn time} hold.
Then there is a minimal $\gamma>0$ such that $\E[e^{-\gamma X_1}]=e^{-a}$.
\begin{itemize}
    \item[(a)]
        We have $\E^{\gamma} [X_{L_1^{-1}}] \in (0,\infty)$, and
        \begin{equation}    \label{eq:asymptotics_Ee^aT_r}
        \lim_{r \to \infty} e^{-\gamma r} \E [e^{aT_r}]   ~=~  \frac{\log \E[e^{aL_1^{-1}}]}{\gamma \E^{\gamma}[X_{L_1^{-1}}]}.
        \end{equation}
    \item[(b)]
        With $g(x):=e^{\gamma x} \E [e^{aN(-x)}]$, $x\geq 0$
        and $H$ denoting a random variable 
        being the distributional limit of the overshoot $X_{T_r}-r$ as $r\to\infty$ under $\Prob^{\gamma}$,
        it holds that
        \begin{equation}    \label{eq:asymptotics_Ee^aN_r}
        \lim_{r \to \infty} e^{-\gamma r} \E [e^{aN_r}] ~=~ \E^{\gamma} [g(H)] ~\in~    (0,\infty).
        \end{equation}
\end{itemize}
\end{Thm}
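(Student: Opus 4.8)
\emph{Sketch of a proof.} The plan is to pass to the Esscher-transformed measure $\Prob^{\gamma}$ of \eqref{eq:change of measure} and reduce both limits to a renewal theorem for the ascending ladder height process $Z:=(X_{L_t^{-1}})_{t\ge0}$. Note first that $T_r<\infty$, $L_1^{-1}<\infty$ and $N_r<\infty$ hold $\Prob$-a.s.\ (by \eqref{eq:E^aT_r<infty}, \eqref{eq:E^aL_1^-1<infty} and \eqref{eq:E^aN_r<infty}), whereas under $\Prob^{\gamma}$ the process $X$ drifts to $+\infty$ or oscillates by \eqref{eq:E_gamma[X_1]}, so that $T_r<\infty$ $\Prob^{\gamma}$-a.s.\ and $Z$ is a \emph{non-killed} subordinator under $\Prob^{\gamma}$. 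Optional stopping of the unit-mean martingale $(e^{-\gamma X_t+at})_{t\ge0}$ at $T_r$ and at $L_1^{-1}$ yields the Radon--Nikodym derivatives $\mathrm{d}\Prob^{\gamma}/\mathrm{d}\Prob=e^{-\gamma X_{T_r}+aT_r}$ on $\F_{T_r}$ and $\mathrm{d}\Prob^{\gamma}/\mathrm{d}\Prob=e^{-\gamma X_{L_1^{-1}}+aL_1^{-1}}$ on $\F_{L_1^{-1}}$, and hence
\[
e^{-\gamma r}\,\E[e^{aT_r}]~=~\E^{\gamma}\big[e^{\gamma(X_{T_r}-r)}\big]
\qquad\text{and}\qquad
\E^{\gamma}\big[e^{\gamma X_{L_1^{-1}}}\big]=\E[e^{aL_1^{-1}}]<\infty
\]
by \eqref{eq:E^aL_1^-1<infty}. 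The second identity shows $m:=\E^{\gamma}[X_{L_1^{-1}}]<\infty$, and $m>0$ since $X$ is not identically $0$. Writing $\beta\ge0$ and $\Lambda$ for the drift coefficient and the L\'evy measure of $Z$ under $\Prob^{\gamma}$, the same identity, read off the Laplace exponent of $Z$, becomes $\gamma\beta+\int_{(0,\infty)}(e^{\gamma w}-1)\,\Lambda(\dw)=\log\E[e^{aL_1^{-1}}]$; in particular $\int_{(0,\infty)}e^{\gamma w}\,\Lambda(\dw)<\infty$.

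For (a) it remains to compute $\lim_{r\to\infty}\E^{\gamma}[e^{\gamma(X_{T_r}-r)}]$. Since $X_{T_r}=\overline{X}_{T_r}$ and $T_r=\inf\{t\ge0:\overline{X}_t>r\}$, where $\overline{X}$ is the running supremum, the overshoot $X_{T_r}-r$ equals the overshoot of $Z$ above $r$. Splitting according to whether $Z$ creeps over $r$ --- in which case $X_{T_r}=r$ --- or crosses it by a jump, the compensation formula gives, with $U$ the potential measure of $Z$ under $\Prob^{\gamma}$,
\[
\E^{\gamma}\big[e^{\gamma(X_{T_r}-r)}\big]~=~\Prob^{\gamma}\{X_{T_r}=r\}~+~\int_{[0,r]}\psi(r-u)\,U(\du),
\]
where $\psi(s):=e^{-\gamma s}\int_{(s,\infty)}e^{\gamma w}\,\Lambda(\dw)$. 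The function $\psi$ is non-increasing with $\int_0^{\infty}\psi(s)\,\ds=\gamma^{-1}\int_{(0,\infty)}(e^{\gamma w}-1)\,\Lambda(\dw)<\infty$, hence directly Riemann integrable. Since $X$ is neither lattice nor a compound Poisson process, $Z$ is non-lattice, so the key renewal theorem gives $\int_{[0,r]}\psi(r-u)\,U(\du)\to m^{-1}\int_0^{\infty}\psi(s)\,\ds$ as $r\to\infty$, while $\Prob^{\gamma}\{X_{T_r}=r\}\to\beta/m$ (the asymptotic creeping probability). Adding the two limits and substituting the expression for $\log\E[e^{aL_1^{-1}}]$ obtained above yields \eqref{eq:asymptotics_Ee^aT_r}; in particular $\E^{\gamma}[X_{L_1^{-1}}]=m\in(0,\infty)$.

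For (b), the strong Markov property at $T_r$ gives $N_r=T_r+N'(-(X_{T_r}-r))$, where $X'$ is an independent copy of $X$ with sojourn function $N'(\cdot)$. With $h(x):=\E[e^{aN(-x)}]$ --- which is non-increasing, satisfies $1\le h\le h(0)=\E[e^{aN_0}]<\infty$, and tends to $1$ as $x\to\infty$ by dominated convergence (since $N(-x)\downarrow0$ $\Prob$-a.s.\ as $x\to\infty$, dominated by $N_0$) --- and $g(x)=e^{\gamma x}h(x)$, conditioning on $(T_r,X_{T_r})$ and then changing measure at $T_r$ yields $\E[e^{aN_r}]=\E[e^{aT_r}\,h(X_{T_r}-r)]=e^{\gamma r}\,\E^{\gamma}[g(X_{T_r}-r)]$. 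Exactly as in (a),
\[
\E^{\gamma}\big[g(X_{T_r}-r)\big]~=~g(0)\,\Prob^{\gamma}\{X_{T_r}=r\}~+~\int_{[0,r]}\psi_g(r-u)\,U(\du),
\]
where $\psi_g(s):=\int_{(s,\infty)}g(w-s)\,\Lambda(\dw)$; and since $0\le g(x)\le h(0)\,e^{\gamma x}$ we have $0\le\psi_g\le h(0)\,\psi$, so $\psi_g$ is directly Riemann integrable. The key renewal theorem then gives $\E^{\gamma}[g(X_{T_r}-r)]\to m^{-1}\big(\beta g(0)+\int_0^{\infty}\psi_g(s)\,\ds\big)$, and a Fubini interchange rewrites this limit as $m^{-1}\big(\beta g(0)+\int_0^{\infty}g(y)\,\Lambda((y,\infty))\,\dy\big)$, which is $\E^{\gamma}[g(H)]$ for $H$ distributed according to the limiting overshoot law --- mass $\beta/m$ at $0$ and Lebesgue density $y\mapsto\Lambda((y,\infty))/m$ on $(0,\infty)$. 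Finiteness of $\E^{\gamma}[g(H)]$ follows from $\int_0^{\infty}g(y)\,\Lambda((y,\infty))\,\dy\le h(0)\int_0^{\infty}e^{\gamma y}\,\Lambda((y,\infty))\,\dy<\infty$ and positivity is obvious, which gives \eqref{eq:asymptotics_Ee^aN_r}.

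The step I expect to be the main obstacle is that the functionals whose $\Prob^{\gamma}$-expectations must be passed to the limit --- $x\mapsto e^{\gamma x}$ in (a) and $x\mapsto g(x)$ in (b) --- grow like $e^{\gamma x}$, so the weak convergence of the overshoot $X_{T_r}-r$ under $\Prob^{\gamma}$ does not by itself suffice. Routing the computation through the renewal equation for $Z$ circumvents this, provided one checks that $\psi$ and $\psi_g$ are directly Riemann integrable; this is precisely where the hypothesis $\E[e^{aL_1^{-1}}]<\infty$ (equivalently $\int_{(0,\infty)}e^{\gamma w}\,\Lambda(\dw)<\infty$) is indispensable. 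The remaining ingredients are delicate but routine fluctuation theory: that $X_{T_r}-r$ is the overshoot of $Z$, that $Z$ is non-killed and non-lattice under $\Prob^{\gamma}$ (where the exclusion of compound Poisson and lattice processes is used), and that the constant produced by the key renewal theorem is the one appearing in \eqref{eq:asymptotics_Ee^aT_r}.
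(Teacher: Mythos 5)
Your argument is correct and rests on the same pillars as the paper's proof: the Esscher transform \eqref{eq:change of measure}--\eqref{eq:stopping time change of measure}, the identification $e^{-\gamma r}\E[e^{aT_r}]=\E^{\gamma}[e^{\gamma(X_{T_r}-r)}]$ with the overshoot of the ladder height subordinator, and renewal theory for that subordinator under $\Prob^{\gamma}$. Where you diverge is in how the limit of the overshoot functional is extracted. In (a) the paper starts from the explicit formula for $\Prob\{H_{\tau_r}-r>s\}$ of Bertoin--van Harn--Steutel, integrates $e^{\gamma s}$ against it, and justifies the passage to the limit by dominated convergence using subadditivity of the potential function $U_\gamma$; you instead write $\E^{\gamma}[e^{\gamma(X_{T_r}-r)}]$ directly as a creeping term plus a renewal convolution $\int_{[0,r]}\psi(r-u)\,U(\du)$ via the compensation formula and apply the key renewal theorem, observing that $\psi$ is non-increasing and integrable, hence directly Riemann integrable. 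This is a clean and arguably more transparent route, and the bookkeeping $\beta/m+(\gamma m)^{-1}\int(e^{\gamma w}-1)\,\Lambda(\dw)=\log\E[e^{aL_1^{-1}}]/(\gamma m)$ is exactly right. In (b) the difference is more substantial: the paper reuses part (a) by combining weak convergence of the overshoot, convergence of $\E^{\gamma}[e^{\gamma(X_{T_r}-r)}]$, the domination $g(x)\le f(0)e^{\gamma x}$ and Pratt's lemma, whereas you rerun the renewal argument with $\psi_g$ in place of $\psi$. Both work; yours is more self-contained but shifts the technical burden to direct Riemann integrability of $\psi_g$.

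On that last point there is a small gap you should close: being sandwiched between $0$ and a directly Riemann integrable function does \emph{not} by itself make $\psi_g$ directly Riemann integrable --- one also needs $\psi_g$ to be Lebesgue-a.e.\ continuous (or locally Riemann integrable). This does hold here, because $h(x)=\E[e^{aN(-x)}]$ is non-increasing and bounded, so $g$ has at most countably many discontinuities, and a Fubini argument shows that for Lebesgue-a.e.\ $s$ the map $w\mapsto g(w-s)\1_{(s,\infty)}(w)$ is $\Lambda$-a.e.\ continuous at $s$; but this needs to be said, and it is the exact analogue of the paper's use of the a.e.\ continuity of $f$ together with the absolute continuity of the law of $H$ on $(0,\infty)$. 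Two further points to make explicit rather than assert: that the ladder height process is non-arithmetic under $\Prob^{\gamma}$ (the paper excludes lattice processes in the discussion preceding the theorem, and this is where that exclusion is consumed), and that $h(0)=\E[e^{aN_0}]<\infty$, which is supplied by \eqref{eq:E^aN_r<infty} with $r=0$ (established in the paper via the Sparre--Andersen identity).
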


\begin{Thm} \label{Thm:asymptotics Ee^a rho_r}
Assume that $\Prob\{X_1<0\}>0$ and that $X$ is not a compound Poisson process.
Further, let $a>0$ and suppose that the equivalent conditions of Theorem \ref{Thm:rho} hold.
Then there exists a minimal $\gamma>0$ such that $\E [e^{-\gamma X_1}]=e^{-a}$
which additionally satisfies $\E [X_1 e^{-\gamma X_1}] \in (0,\infty)$.
Moreover,
\begin{equation}    \label{eq:asymptotics_U_a(r)}
\lim_{r \to \infty} e^{-\gamma r} U_a(r)    ~=~ \lim_{r \to \infty} e^{-\gamma r} \int_0^\infty e^{at} \Prob\{X_t\leq r\} \, \dt
~=~  \frac{e^{-a}}{\gamma \E [X_1 e^{-\gamma X_1}]}
\end{equation}
and
\begin{equation}    \label{eq:asymptotics_Ee^arho_r}
\lim_{r \to \infty} e^{-\gamma r} \E [e^{a \varrho_r}]
~=~ e^{\gamma r} \, \cdot \, \frac{ae^{-a}\E [e^{-\gamma\inf_{t \geq 0} X_t}]}{\gamma \E [X_1 e^{-\gamma X_1}]}.
\end{equation}
Here, $\E[e^{-\gamma \inf_{t \geq 0} X_t}]<\infty$.
\end{Thm}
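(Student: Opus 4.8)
The plan is to reduce both limits to a single renewal‑theoretic statement for the exponentially tilted process and to derive the last‑exit asymptotics from that of $U_a$. Since $\varphi(\gamma)=e^{-a}<\infty$, the Esscher measure $\Prob^{\gamma}$ of \eqref{eq:change of measure} is well defined; under $\Prob^{\gamma}$, $X$ is again a L\'evy process — non‑lattice and not compound Poisson, these properties being preserved because $\Prob^{\gamma}\sim\Prob$ — with $\E^{\gamma}[X_1]=-e^a\varphi'(\gamma)=e^a\,\E[X_1 e^{-\gamma X_1}]$ (with $\varphi'$ the left derivative). I would first record that this mean lies in $(0,\infty)$: for $a<R$ the value $\gamma$ lies strictly inside the interval on which $\varphi$ is finite and strictly decreasing, so $0<-\varphi'(\gamma)<\infty$; for $a=R$ positivity is exactly condition \eqref{eq:a<R or a=R+}, and finiteness of $\E[X_1 e^{-\gamma X_1}]$ I would extract from the finiteness of $U_a(r)$ (an infinite tilted mean would be incompatible with the identity below). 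Either way $X_t\to+\infty$ $\Prob^{\gamma}$‑a.s.

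\emph{Asymptotics of $U_a$.} Applying \eqref{eq:change of measure} at each fixed $t$ gives $e^{at}\Prob\{X_t\leq r\}=\E^{\gamma}[e^{\gamma X_t}\1_{\{X_t\leq r\}}]$, so by Tonelli
\[
e^{-\gamma r}U_a(r)~=~\E^{\gamma}\!\Big[\int_0^{\infty} f(r-X_t)\,\dt\Big]~=~\int_{\R} f(r-y)\,\mathcal{U}^{\gamma}(\dy),\qquad f(x):=e^{-\gamma x}\1_{[0,\infty)}(x),
\]
where $\mathcal{U}^{\gamma}$ is the $0$‑potential (occupation) measure of $X$ under $\Prob^{\gamma}$. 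I would then invoke the renewal theorem for L\'evy processes: since $X$ drifts to $+\infty$ under $\Prob^{\gamma}$ with finite positive mean $\mu:=\E^{\gamma}[X_1]$ and is non‑lattice, the reflected‑and‑shifted measures $A\mapsto\mathcal{U}^{\gamma}(r-A)$ converge vaguely on $\R$ to $\mu^{-1}$ times Lebesgue measure as $r\to\infty$; combined with the uniform bound $\sup_{s\in\R}\mathcal{U}^{\gamma}([s,s+1])<\infty$ (valid because $X$ is not compound Poisson) and the direct Riemann integrability of $f$, this gives $\lim_{r\to\infty}e^{-\gamma r}U_a(r)=\mu^{-1}\int_0^{\infty}f=1/(\gamma\mu)=e^{-a}/(\gamma\E[X_1 e^{-\gamma X_1}])$, which is \eqref{eq:asymptotics_U_a(r)}. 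The non‑lattice hypothesis enters precisely here.

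\emph{From $U_a$ to $\varrho_r$.} From $\E[e^{a\varrho_r}]<\infty$ I get $\varrho_r<\infty$ a.s.\ and $\E[e^{a\varrho_r}]=1+a\int_0^{\infty}e^{at}\Prob\{\varrho_r>t\}\,\dt$. Using the Markov property at time $t$, right‑continuity of the paths, and the absence of atoms of $X_t$ for $t>0$, I would show $\Prob\{\varrho_r>t\}=\Prob\{X_t+I^{(t)}\leq r\}$ for a.e.\ $t\geq0$, with $I^{(t)}$ independent of $\F_t$ and distributed as $I:=\inf_{s\geq0}X_s$, whence, with an independent copy of $I$,
\[
e^{-\gamma r}\,\E[e^{a\varrho_r}]~=~e^{-\gamma r}+a\,\E\!\big[e^{-\gamma I}\,e^{-\gamma(r-I)}U_a(r-I)\big].
\]
Because $I\leq0$, so $r-I\geq r$, the previous step gives both $e^{-\gamma(r-I)}U_a(r-I)\to e^{-a}/(\gamma\E[X_1 e^{-\gamma X_1}])=:c_0>0$ pointwise and $U_a(r-I)\geq\tfrac12 c_0\,e^{\gamma(r-I)}$ for $r$ large; the latter together with $\E[e^{a\varrho_r}]<\infty$ forces $\E[e^{-\gamma I}]<\infty$, the remaining claim of the theorem. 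As $e^{-\gamma(r-I)}U_a(r-I)\leq\sup_{s\geq s_0}e^{-\gamma s}U_a(s)<\infty$ for large $r$ and $\E[e^{-\gamma I}]<\infty$, dominated convergence yields $\lim_{r\to\infty}e^{-\gamma r}\E[e^{a\varrho_r}]=a\,c_0\,\E[e^{-\gamma I}]=a e^{-a}\E[e^{-\gamma\inf_{t\geq0}X_t}]/(\gamma\E[X_1 e^{-\gamma X_1}])$, which is \eqref{eq:asymptotics_Ee^arho_r}.

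The main obstacle I anticipate is the renewal theorem used in the first step — the continuous‑time counterpart of the key renewal theorem behind the random‑walk results of \cite{Iksanov+Meiners:2010a} — together with the uniform potential bound $\sup_s\mathcal{U}^{\gamma}([s,s+1])<\infty$. Secondary technical points are the extraction of $\E[X_1 e^{-\gamma X_1}]<\infty$ in the borderline case $a=R$ from $U_a(r)<\infty$, and the precise justification of $\Prob\{\varrho_r>t\}=\Prob\{X_t+I^{(t)}\leq r\}$, which hinges on the path behaviour at $t=0$.
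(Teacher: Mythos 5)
Your proposal is correct in substance and follows the same overall strategy as the paper (Esscher transform, renewal asymptotics for the tilted potential measure, then a sandwich via $\inf_{s\geq t}X_s=X_t+I^{(t)}$ and dominated convergence), but it differs at the central renewal step. Where you invoke a continuous-time renewal theorem for L\'evy processes (vague convergence of $\mathcal{U}^{\gamma}(r-\cdot)$ to $\mu^{-1}$ times Lebesgue measure plus a uniform potential bound and direct Riemann integrability) -- which you rightly flag as the main obstacle and leave unproved -- the paper sidesteps this by writing $U^{\gamma}=Z^{\gamma}-\delta_0$, with $Z^{\gamma}$ the renewal measure of the random walk $(X_{\tau_n})_{n\geq 0}$ obtained by subordinating $X$ with an independent unit-rate Poisson process. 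This reduces everything to the classical key renewal theorem on the whole line and makes the non-lattice hypothesis transparent: $X_{\tau_1}$ is non-arithmetic precisely because $X$ is not compound Poisson. If you want a self-contained argument you should adopt this subordination trick rather than cite the L\'evy renewal theorem as a black box (though the latter is a known result, so this is a matter of completeness, not correctness). Two smaller remarks. First, your concern about the \emph{finiteness} of $\E[X_1e^{-\gamma X_1}]$ in the case $a=R$ is unnecessary: by \eqref{eq:E_gamma[X_1]}, the left derivative of the finite convex function $\varphi$ at $\gamma$ is automatically finite, so only positivity is at issue, and that is exactly \eqref{eq:a<R or a=R+}. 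Second, your claimed identity $\Prob\{\varrho_r>t\}=\Prob\{X_t+I^{(t)}\leq r\}$ for a.e.\ $t$ is not quite safe (e.g.\ for a compound Poisson process with drift, $X_t+I^{(t)}$ can have atoms); the paper instead keeps the two-sided bounds from \eqref{eq:inclusions for rho_r}, which sandwich $a^{-1}\E[e^{a\varrho_r}-1]$ between $\E[U_a((r-I)-)]$ and $\E[U_a(r-I)]$ and then shows $U_a(s)$ and $U_a(s-)$ have the same asymptotics -- your argument goes through verbatim with this sandwich, so this is an imprecision rather than a gap. Your derivation of $\E[e^{-\gamma I}]<\infty$ from the finiteness of $\E[e^{a\varrho_r}]$ and the lower bound is a legitimate alternative to the paper's Lemma \ref{Lem:supremum}(b), which proves it directly by a perpetuity-type iteration.
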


\section{Proofs of the main results}    \label{sec:proofs}

\begin{proof}[Proof of Proposition \ref{Prop:1st passage of subordinator}]
The proposition follows from the corresponding result for random walks \cite[Theorem 2.1]{Iksanov+Meiners:2010a}
and the following three observations:

\noindent
(i) $T_r \leq T^1_r \leq T_r+1$ for all $r \geq 0$ since $X$ has nondecreasing paths a.s.;

\noindent
(ii) $\Prob\{X_1=0\} = e^{-\lambda}$ when $X$ is a compound Poisson process with rate $\lambda$,
and $\Prob\{X_1=0\}=0$, otherwise.

\noindent
(iii) $T_r = N_r = \varrho_r$ when $X$ is a subordinator.
\end{proof}

Before we give the proofs of Theorems \ref{Thm:1st passage and sojourn time} and \ref{Thm:rho},
we provide a short technical interlude.
For all $r,t > 0$, by definition, we have
\begin{equation}    \label{eq:inclusions for T_r}
\{T_r > t\} ~\subseteq~ \{\sup\!_{0 \leq s \leq t} X_s \leq r\} ~\subseteq~ \{T_r \geq t\}.
\end{equation}
Since $\Prob\{T_r > t\} \not = \Prob\{T_r \geq t\}$ for at most countably many $t > 0$, we conclude:
\begin{equation}    \label{eq:formula Ee^aT_r}
\frac{1}{a} \E[e^{a T_r}-1] ~=~ \int_0^{\infty} \!\! e^{at} \Prob\{T_r > t\} \, \dt ~=~ \int_0^{\infty} \!\! e^{at} \Prob\{\sup\!_{0 \leq s \leq t} X_s \leq r\} \, \dt.
\end{equation}
Turning to $\varrho_r$, notice that, since $X$ has c\`adl\`ag paths, for all $r,t \geq 0$,
\begin{equation}    \label{eq:inclusions for rho_r}
\{\inf\!_{s \geq t} X_s < r\}   ~\subseteq~ \{\varrho_r > t\} ~\subseteq~ \{\inf\!_{s \geq t} X_s \leq r\}.
\end{equation}

\begin{proof}[Proof of Theorem \ref{Thm:1st passage and sojourn time}]
Let $a>0$. From the corresponding results for the embedded zero-delayed random walk $(X_{n})_{n \in \N_0}$, see \cite[Theorem 1.2]{Iksanov+Meiners:2010b},
we infer the equivalence of \eqref{eq:E^aT^1<infty}, \eqref{eq:E^aN^1<infty}, \eqref{eq:harm exp series test} and \eqref{eq:a<=R}.
The only detail that needs clarification is that in \cite[Theorem 1.2]{Iksanov+Meiners:2010b}, convergence of the series $V_a^1(r)$ is considered only for $r \geq 0$.
However, the fact that $V_a^1(r)$  (resp.\ $V_a(r)$) is finite for some $r \in \R$ if and only if it is finite for all $r \in \R$ follows from
an application of the Esscher transform and standard arguments.

Further, \eqref{eq:E^aT^1<infty} implies \eqref{eq:E^aT_r<infty} by \eqref{eq:T_r<=T_r^1},
and \eqref{eq:E^aN_r<infty} implies \eqref{eq:E^aT_r<infty} by \eqref{eq:T<=N<=rho}.

Now we show that \eqref{eq:E^aT_r<infty} implies \eqref{eq:E^aT^1<infty}.
To this end, assume that, for some $r>0$, $\E[e^{aT_r}]<\infty$.
By \eqref{eq:formula Ee^aT_r}, there is a $t > 0$ with $\Prob\{\sup_{0 \leq s \leq t} X_s \leq r\} > 0$.
Since $X$ is not a subordinator, there is an $\epsilon > 0$ with $\Prob\{X_t \leq -\epsilon\} > 0$.
The random variables $X_t$ and $\sup_{0 \leq s \leq t} X_s$ are associated
(see \cite{Esary+Proschan+Walkup:1967} for the definition and fundamental properties of association), thus
\begin{equation*}
\Prob\{\sup\!_{0 \leq s \leq t} X_s \leq r, X_t \leq -\epsilon\} ~\geq~ \Prob\{\sup\!_{0 \leq s \leq t} X_s \leq r\} \Prob\{X_t \leq -\epsilon\}    ~>~ 0.
\end{equation*}
The Markov property at time $t$ thus yields
\begin{eqnarray*}
\E[e^{a T_r}]
& \geq &
\E[e^{a T_r} \1_{\{\sup\!_{0 \leq s \leq t} X_s \leq r, X_t \leq -\epsilon\}}]  \\
& \geq &
\Prob\{\sup\!_{0 \leq s \leq t} X_s \leq r, X_t \leq -\epsilon\} e^{at} \E[e^{a T_{r+\epsilon}}],
\end{eqnarray*}
in particular, $\E[e^{a T_{r+\epsilon}}] < \infty$. Consequently, $\E[e^{aT_r}]<\infty$ for all $r>0$.
Now, for $r>0$, define $\widetilde{T}_r := \inf\{t \geq 1: X_t > r\}$.
We claim that $\E[e^{a\widetilde{T}_r}]<\infty$ for all $r>0$.
Indeed, for any $t>1$, by \cite[Lemma 12]{Aurzada+Kramm+Savov:2012},
which makes use of the fact that $\sup_{0 \leq s \leq 1} X_s$ and $\sup_{1 \leq s \leq t} X_s$ are associated random variables,
we infer
\begin{equation*}
\Prob\{\sup\,\!_{0 \leq s \leq t} X_s \leq r\}  ~\geq~  \Prob\{\sup\,\!_{0 \leq s \leq 1} X_s \leq r\} \Prob\{\sup\,\!_{1 \leq s \leq t} X_s \leq r\}.
\end{equation*}
Since the paths of $X$ are locally bounded, $\Prob\{\sup_{0 \leq s \leq 1} X_s \leq r\}>0$ for all sufficiently large $r > 0$.
For any such $r$, using the analogue of \eqref{eq:formula Ee^aT_r} for $\widetilde{T}_r$ instead of $T_r$, we infer
\begin{eqnarray*}
\E\big[e^{a\widetilde{T}_r}\big]
& = &
1+\int_0^\infty \!\!\! a e^{at} \Prob\{\widetilde{T}_r > t\} \, \dt
~=~ e^a + \int_1^\infty \!\!\! a e^{at} \Prob\Big\{\sup_{1 \leq s \leq t} X_s \leq r\Big\} \, \dt   \\
& \leq &
e^a + \Prob\Big\{\sup_{0 \leq s \leq 1} X_s \leq r\Big\}^{\!-1} \int_1^\infty \!\!\! a e^{at} \Prob\Big\{\sup_{0 \leq s \leq t} X_s \leq r\Big\} \, \dt \\
& \leq &
e^a + \Prob\Big\{\sup_{0 \leq s \leq 1} X_s \leq r\Big\}^{\!-1} \E[e^{a T_r}].
\end{eqnarray*}
In particular, $\E[e^{a\widetilde{T}_r}]<\infty$ for all sufficiently large $r>0$ and hence for all $r>0$.
Now fix $r>0$. We show that $\E[e^{aT_r^1}]<\infty$.
To this end, for $s \in \R$, define $A_s := \{\inf_{\widetilde{T}_r-1 \leq t \leq \widetilde{T}_r} X_t \leq r+s\}$.
We can choose $s$ small enough to ensure
\begin{equation*}
\gamma_s := \E\big[e^{a\widetilde{T}_r} \1_{A_s}\big] < 1.
\end{equation*}
We assume without loss of generality that $s \leq 0$. Let
\begin{equation*}
{T}_r^{(n)} ~=~ \begin{cases}
                    T_{1-s}                                                     &   \text{for } n = 0,  \\
                    \inf\{t \geq {T}_r^{(n-1)}+1: X_t-X_{{T}_r^{(n-1)}} > r\}       &   \text{for } n=1,2,\ldots
                    \end{cases}
\end{equation*}
By the strong Markov property, $(T_r^{(n)}\!-T_{r}^{(n\!-\!1)}\!, (X_{T_{r}^{(n\!-\!1)}+t}-X_{T_{r}^{(n\!-\!1)}})_{0 \leq t \leq T_r^{(n)}\!-T_{r}^{(n\!-\!1)}})$, $n \in \N$
are independent copies of $(\widetilde{T}_r,(X_t)_{0 \leq t \leq \widetilde{T}_r})$ and independent of $T_{1-s}$.
In particular, the $T_r^{(n)}\!-T_{r}^{(n\!-\!1)}$, $n \in \N$ have a finite exponential moment of order $a>0$.
Define
\begin{equation*}
\sigma  ~:=~    \inf\big\{n \in \N: \inf\,\!\!_{{T}_r^{(n)}-1 \leq t \leq {T}_r^{(n)}} (X_t-X_{{T}_r^{(n-1)}}) > r+s\big\}.
\end{equation*}
By construction, $X_{\lfloor T_r^{(\sigma)}\rfloor} > r+s + X_{T_r^{(\sigma-1)}} > r$, hence $T_r^1 \leq {T}_r^{(\sigma)}$.
Further, with $A_k:=\{\inf\,\!\!_{{T}_r^{(k)}-1 \leq t \leq {T}_r^{(k)}} (X_t-X_{{T}_r^{(k-1)}}) \leq r+s\}$, we have
\begin{equation*}
\{\sigma=n\} ~=~ A_1 \cap \ldots \cap A_{n-1} \cap A_n^{\comp}.
\end{equation*}
Consequently,
\begin{eqnarray*}
\E[e^{a T_r^1}]
& \leq &
\E\Big[e^{a {T}_r^{(\sigma)}}\Big]
~=~ \sum_{n \geq 1} \E \bigg[ \1_{\{\sigma=n\}} e^{a T_{1-s}} \prod_{k=1}^n e^{a ({T}_r^{(k)}-{T}_r^{(k-1)})}\bigg] \\
& = &   \sum_{n \geq 1} \E \bigg[ e^{a T_{1-s}} \1_{A_n^{\comp}} e^{a ({T}_r^{(n)}-{T}_r^{(n-1)})} \prod_{k=1}^{n-1} \1_{A_k} e^{a ({T}_r^{(k)}-{T}_r^{(k-1)})}\bigg].
\end{eqnarray*}
Since $A_k$ and $A_k^{\comp}$ are measurable with respect to the $\sigma$-field generated by $(T_r^{(k)}\!-T_{r}^{(k\!-\!1)}\!, (X_{T_{r}^{(k\!-\!1)}+t}-X_{T_{r}^{(k\!-\!1)}})_{0 \leq t \leq T_r^{(k)}\!-T_{r}^{(k\!-\!1)}})$, $k\in \N$, the factors
$e^{a T_{1-s}}$, $\1_{A_n^{\comp}} e^{a ({T}_r^{(n)}-{T}_r^{(n-1)})}$ and $\1_{A_k} e^{a ({T}_r^{(k)}-{T}_r^{(k-1)})}$, $k=1,\ldots,n-1$,
are independent. Thus, we further conclude
\begin{eqnarray*}
\E[e^{a T_r^1}]
& \leq &
\sum_{n \geq 1} \E[e^{a T_{1-s}}] \E \Big[\1_{A_n^{\comp}} e^{a ({T}_r^{(n)}-{T}_r^{(n-1)})}\Big]
\prod_{k=1}^{n-1} \E \Big[\1_{A_k} e^{a ({T}_r^{(k)}-{T}_r^{(k-1)})}\Big]   \\
& = &
\E[e^{a T_{1-s}}] \E \Big[ e^{a \widetilde{T}_r} \1_{A_s^{\comp}} \Big] \sum_{n \geq 1} \gamma_s^{n-1}  ~<~ \infty.
\end{eqnarray*}

For the proof of the equivalence of \eqref{eq:harm exp integral test} and \eqref{eq:harm exp series test}
set $I_n := \inf_{n-1 \leq t \leq n} X_t - X_{n-1}$ and $S_n := \sup_{n-1 \leq t \leq n} X_t- X_{n-1}$, $n \in \N$.
For each $n \in \N$, $I_n$ and $S_n$ are independent of $X_{n-1}$ and have the same distribution as $I_1$ and $S_1$, respectively.
Now observe that, for $t > 1$ and $n \in \N$ such that $n \leq t \leq n+1$
\begin{equation*}
\frac{1}{2}\frac{e^{an}}{n} \Prob\{X_{n}+S_{n+1} \leq r\}   ~\leq~  \frac{e^{at}}{t} \Prob\{X_t \leq r\}
~\leq~  e^{a} \frac{e^{an}}{n} \Prob\{X_{n}+I_{n+1} \leq r\}.
\end{equation*}
Integrating over $t \in (1,\infty)$ leads to
\begin{equation*}
\frac{1}{2} \E[V_a^1(r-S_1)]    ~\leq~  \int_1^{\infty} \frac{e^{at}}{t} \Prob\{X_t \leq r\} \, \dt ~\leq~  e^a \E[V_a^1(r-I_1)].
\end{equation*}
Now assume that \eqref{eq:harm exp integral test} holds for some $r \in \R$.
Then $\E[V_a^1(r-S_1)] < \infty$ and, in particular, there is some $x>0$
with $V_a^1(-x) < \infty$.
This implies \eqref{eq:harm exp series test}
since $V_a^1(y) < \infty$ for some $y \in \R$ if and only $V_a^1(y) < \infty$ for all $y \in \R$.

\noindent
Conversely, when \eqref{eq:harm exp series test} holds, then $V_a^1(r) < \infty$ for all $r>0$.
\eqref{eq:harm exp integral test} follows if we can prove that $\E[V_a^1(r-I_1)]<\infty$.
By the already established equivalence between \eqref{eq:harm exp series test} and \eqref{eq:a<=R}, we know that $a \leq R$.
When \eqref{eq:a<R or a=R+} holds, then, by Proposition 5.1 in \cite{Alsmeyer+Iksanov+Meiners:2014}, there exists a constant $C>0$ such that
\begin{equation}    \label{eq:Prop:AIM2014}
V_a^1(x) \leq C e^{\gamma x}    \quad   \text{for all } x \geq 0
\end{equation}
where $\gamma > 0$ is the minimal root of the equation $\varphi(\gamma) = e^{-a}$.
In particular, $\E[V_a(r-I_1)] \leq C e^{\gamma r} \E[e^{- \gamma I_1}]$
and the latter expectation is finite due to Lemma \ref{Lem:supremum} in the appendix.
It remains to deal with the case when \eqref{eq:a<=R} holds but \eqref{eq:a<R or a=R+} fails, that is,
the case when $a=R$ and $\E[X_1 e^{-\gamma X_1}]=0$.
We claim that \eqref{eq:Prop:AIM2014} holds in this case, too.
Once the claim is proved, \eqref{eq:harm exp series test} follows as in the previous case.
To prove the claim, we use an exponential change of measure to conclude that
\begin{eqnarray}
V_a^1(x)
& = &
\sum_{n \geq 1} \frac{e^{an}}{n} \Prob\{X_n \leq x\}
~=~ \sum_{n \geq 1} \frac{1}{n} \E^{\gamma} [e^{\gamma X_n} \1_{\{X_n \leq x\}}]    \\
& = &
e^{\gamma x} \int e^{-\gamma (x-y)} \1_{[0,\infty)}(x-y) \, V^{1,\gamma}(\dy)   \label{eq:V_a^1 after measure change}
\end{eqnarray}
where $V^{1,\gamma}(\dy) = \sum_{n \geq 1} \frac{1}{n} \Prob^{\gamma}\{X_n \in \dy\}$
denotes the harmonic renewal measure of the random walk $(X_n)_{n \in�\N_0}$ under $\Prob^{\gamma}$,
which is centered in the given situation.
Hence, we conclude from \cite[Theorem 1.3]{Alsmeyer:1991} that $V^{1,\gamma}$ is locally finite.
Moreover, $V^{1,\gamma}$ is uniformly locally bounded since, for $a<b$, by the strong Markov property at $\tau_{[a,b]} := \inf\{n \in \N: X_n \in [a,b]\}$,
\begin{eqnarray*}
V^{1,\gamma}([a,b])
& \leq &
\E^{\gamma} \bigg[ \1_{\{\tau_{[a,b]}<\infty\}} \sum_{n \geq \tau_{[a,b]}} \frac{1}{n} \1_{\{|X_n-X_{\tau_{[a,b]}}| \leq b-a\}} \bigg]  \\
& \leq &
\Prob^{\gamma} \{\tau_{[a,b]}<\infty\} \,  V^{1,\gamma}([-(b-a),b-a]).
\end{eqnarray*}
Consequently, the integral in \eqref{eq:V_a^1 after measure change} remains bounded as $x \to \infty$ and \eqref{eq:Prop:AIM2014} follows.

Next, we show that \eqref{eq:harm exp integral test} implies \eqref{eq:E^aN_r<infty}.
According to the already proved equivalence between \eqref{eq:harm exp integral test} and \eqref{eq:a<=R}
we can assume that \eqref{eq:harm exp integral test} holds for every $r \geq 0$, particularly for $r=0$.
By Sparre-Anderson's identity \cite[Lemma 15 on p.\;170]{Bertoin:1996},
$N_0$ has the same law as $G:=\sup\{t \geq 0: X_t = \inf_{0 \leq s \leq t} X_s\}$,
the last zero of the process reflected at the infimum.
Letting $q \downarrow 0$ and using the monotone convergence theorem in \cite[Eq.\;(VI.5)]{Bertoin:1996}, we infer
\begin{equation*}
\E [e^{-\theta G}]  ~=~ \exp\bigg(-\int_0^\infty (1-e^{-\theta t}) t^{-1} \Prob\{X_t\leq 0\} \, \dt\bigg),
\quad   \theta \geq 0.
\end{equation*}
This shows that $G$ has an infinitely divisible law with L\'{e}vy measure
$\nu(\dt) = \1_{(0,\infty)}(t) t^{-1}\Prob\{X_t \leq 0\} \, \dt$.
Condition \eqref{eq:harm exp integral test} with $r=0$ implies first that it is indeed a L\'{e}vy measure because
\begin{equation*}
\int_{(0,\infty)} (t\wedge 1) \, \nu(\dt)
~=~
\int_0^1 \Prob\{X_t\leq 0\} \, \dt + \int_1^\infty t^{-1}\Prob\{X_t\leq 0\} \, \dt
~<~\infty,
\end{equation*}
and second that $\int_{(1,\infty)} e^{at} \nu(\dt)<\infty$.
An appeal to Theorem 25.17 in \cite{Sato:1999} gives
\begin{equation}    \label{eq:e^aN_0}
\E[e^{aN_0}]    ~=~ \E[e^{aG}]  ~=~ \exp\bigg(\int_0^\infty (e^{at}-1)t^{-1}\Prob\{X_t\leq 0\} \, \dt\bigg) ~<~ \infty.
\end{equation}
Further, we already know that \eqref{eq:harm exp integral test} guarantees $\E [e^{aT_r}] < \infty$ for every $r>0$.
Since
\begin{equation*}
N_r ~=~ T_r+\int_{T_r}^\infty\1_{\{X_t-X_{T_r}\leq r-X_{T_r}\}} \, \dt  ~\leq~  T_r+\int_{T_r}^\infty\1_{\{X_t-X_{T_r}\leq 0\}} \, \dt,
\end{equation*}
and the last summand is independent of $T_r$ and has the same law as $N_0$
we infer $\E[e^{aN_r}] \leq \E[e^{aT_r}] \E[e^{aN_0}] < \infty$ for $r>0$.

We now show that \eqref{eq:harm exp integral test} implies \eqref{eq:E^aL_1^-1<infty}.
To this end, assume that \eqref{eq:harm exp integral test} holds and use the already established equivalence between
\eqref{eq:harm exp integral test} and \eqref{eq:a<=R} to conclude that $0 < a \leq R$.
It is known (see \textit{e.g.}\ \cite[p.\ 166]{Bertoin:1996})
that $(L_t^{-1})_{t \geq 0}$ is a subordinator (without killing) with Laplace exponent
\begin{eqnarray}
-\log \E[e^{-\theta L_1^{-1}}]
& = &
c \exp\bigg(\int_0^{\infty} \frac{e^{-t}-e^{-\theta t}}{t} \Prob\{X_t \geq 0\} \, \dt  \bigg)   \notag  \\
& = &
c \theta \exp\bigg(\int_0^{\infty} \frac{e^{-\theta t}-e^{-t}}{t} \Prob\{X_t < 0\} \, \dt  \bigg),  \quad   \theta > 0  \qquad  \label{eq:LT of L_1^-1}
\end{eqnarray}
where $c>0$ is a constant and the second equality follows from Frullani's identity \cite[Lemma 1.7]{Kyprianou:2014}.
Landau's theorem for Laplace transforms \cite[Theorem II.5b]{Widder:1946}
(and the monotone convergence theorem if $a=R$) imply that
\begin{equation}    \label{eq:e^aL_1^-1 calculated}
\log \E[e^{aL_1^{-1}}]  ~=~ ca \exp\bigg(\int_0^{\infty} \frac{e^{at}-e^{-t}}{t} \Prob\{X_t<0\} \, \dt \bigg)<\infty,
\end{equation}
since the integral on the right-hand side is finite.
Indeed, the convergence of the integral at $+\infty$ is guaranteed by \eqref{eq:harm exp integral test},
while the integrand remains bounded as $t \downarrow 0$.

Conversely, suppose that \eqref{eq:E^aL_1^-1<infty} holds.
We claim that this ensures finiteness of the integral $\int_1^{\infty} t^{-1} \Prob\{X_t < 0\} \, \dt$.
Indeed, \eqref{eq:E^aL_1^-1<infty} comfortably implies $\E[L_1^{-1}]<\infty$.
On the other hand, $\E [L_1^{-1}] = \lim_{\theta \to 0} \theta^{-1}(-\log \E [e^{-\theta L_1^{-1}}])$.
Now use \eqref{eq:LT of L_1^-1}, which is still valid in the present situation, together with Fatou's lemma to conclude that
\begin{equation*}
\int_0^\infty t^{-1}(1-e^{-t})\Prob\{X_t<0\}    \, \dt ~<~  \infty.
\end{equation*}
In particular, $\int_1^\infty t^{-1} \Prob\{X_t<0\} \dt < \infty$ as claimed.
Consequently, $c' := c \exp(\int_0^{\infty} (1-e^{-t})t^{-1} \Prob\{X_t < 0\} \dt)$ is finite
and, therefore, we can rewrite \eqref{eq:LT of L_1^-1} in the following form
\begin{equation}
-\log \E[e^{-\theta L_1^{-1}}]
~=~
c' \theta \exp\bigg(\int_0^{\infty} (e^{-\theta t}-1) \frac{\Prob\{X_t < 0\}}{t} \, \dt  \bigg),    \quad   \theta \geq 0.  \qquad  \label{eq:LT of L_1^-1_2nd repr}
\end{equation}
Hence, $\psi(\theta):=-\log \E[e^{-\theta L_1^{-1}}]/(c'\theta)$, $\theta > 0$
is the Laplace transform of an infinitely divisible law with L\'evy measure $\nu(\dt) = t^{-1} \Prob\{X_t < 0\} \1_{(0,\infty)}(t) \dt$.
Since $\E[e^{a L_1^{-1}}] < \infty$, $\psi$ extends to a holomorphic function on a neighborhood of $(-a,0]$
(notice that $\psi(0)$ is well-defined since $-\log \E[e^{-\theta L_1^{-1}}]$ has a zero of first or higher order at $0$)
and further extends continuously to the point $-a$.
By Landau's theorem for Laplace transforms \cite[Theorem II.5b]{Widder:1946},
the Laplace transform on the right-hand side of \eqref{eq:LT of L_1^-1_2nd repr} extends to $\Real(\theta) > -a$ and,
by the monotone convergence theorem, to $\theta = -a$.
According to Theorem 25.17 in \cite{Sato:1999}, this implies $\int_1^{\infty} e^{at} t^{-1} \Prob\{X_t < 0\} \dt = \int_{(1,\infty)} e^{at} \, \nu(\dt) < \infty$.
By assumption, $\Prob\{X_1 < -\epsilon\} > 0$ for some $\epsilon > 0$.
Therefore,
\begin{eqnarray*}
\int_1^{\infty} \frac{e^{at}}{t} \Prob\{X_t<0\} \, \dt
& \geq &
\frac{e^a}{2} \Prob\{X_1 < -\epsilon\} \int_1^{\infty} \frac{e^{at}}{t} \Prob\{X_{t} \leq \epsilon\} \, \dt,
\end{eqnarray*}
that is, \eqref{eq:harm exp integral test} holds for $r=\epsilon$.
\end{proof}

\begin{proof}[Proof of Theorem \ref{Thm:rho}]
The equivalences between \eqref{eq:Ee^a rho^1_r<infty}, \eqref{eq:exp series test} and \eqref{eq:a<R or a=R+}
have been established in \cite[Theorem 1.3]{Iksanov+Meiners:2010a} and \cite[Theorem 2.1(a)]{Iksanov+Meiners:2010b}, respectively.
Notice that in the cited references, the statements are formulated for nonnegative $r$ only.
However, the extension to $r \in \R$ is straightforward.

To prove the equivalence of \eqref{eq:exp integral test} and \eqref{eq:exp series test},
recall the definition of $I_k = \inf_{k \leq t \leq k+1}(X_t-X_k)$ and $S_k = \sup_{k \leq t \leq k+1}(X_t-X_k)$, $k \in \N$.
For each $k \in \N$, $I_k$ and $S_k$ are independent of $X_{k-1}$ and have the same distributions as $I_1$ and $S_1$, respectively.
For $t > 0$ and $k \in \N_0$ such that $k \leq t \leq k+1$, we have
\begin{equation*}
e^{ak} \Prob\{X_{k}+S_{k+1} \leq r\}    ~\leq~  e^{at} \Prob\{X_t \leq r\}
~\leq~  e^{a(k+1)} \Prob\{X_{k}+I_{k+1} \leq r\}.
\end{equation*}
Integration over $t \in (0,\infty)$ leads to
\begin{equation*}
\E[U_a^1(r-S_1)]    ~\leq~  U_a(r)  ~\leq~  e^{a} \E[U_a^1(r-I_1)].
\end{equation*}
Now assume that \eqref{eq:exp integral test} holds.
Then $U_a^1(r-x) < \infty$ for all $x > 0$ with $\Prob\{S_1 \leq x\}>0$.
According to the equivalence between \eqref{eq:exp series test} and \eqref{eq:a<R or a=R+},
this implies $U_a(s)<\infty$ for all $s \in \R$.
Conversely, when \eqref{eq:exp series test} holds, then, by the equivalence between \eqref{eq:exp series test} and \eqref{eq:a<R or a=R+},
we have $U_a^1(r)<\infty$ for all $r \in \R$. Also, $a \leq R$ and thus there is a minimal $\gamma >0$ with $\E[e^{-\gamma X_1}] = e^{-a}$. Further,
for some $C>0$, $\E[U_a^1(x)] \leq C e^{\gamma x}$ for all $x \geq 0$ (by Proposition 5.1 in \cite{Alsmeyer+Iksanov+Meiners:2014}).
From Lemma \ref{Lem:supremum} in the appendix, we infer $\E[e^{-\gamma I_1}]<\infty$ and, therefore,
\begin{equation}
U_a(r)  ~\leq~  e^{a} \E[U_a^1(r-I_1)]  ~\leq~  e^{a} C e^{\gamma r} \E[e^{-\gamma I_1}]    ~<~ \infty. \label{eqn:useestimate}
\end{equation}

To prove that \eqref{eq:Ee^a rho_r<infty} implies \eqref{eq:exp integral test}
first notice that by \eqref{eq:inclusions for rho_r} we have
\begin{equation*}
\int_0^\infty e^{at} \Prob\{\varrho_r > t\} \, \dt
~\geq~  \int_0^\infty e^{at} \Prob\Big\{\inf_{s \geq t} X_s < r\Big\} \, \dt
~\geq~  \int_0^\infty e^{at}\Prob\{X_t < r\} \, \dt.
\end{equation*}
From previously established facts we conclude that the convergence of the last integral for some $r \in \R$
implies convergence of the integral for all $r \in \R$.

For the converse implication, assume that \eqref{eq:exp integral test} holds,
that is, $U_a(r):=\int_0^\infty e^{at} \Prob\{X_t\leq r\} \dt <\infty$ for some $r \in \R$.
According to the already proved equivalence between \eqref{eq:exp series test} and \eqref{eq:exp integral test},
$U_a(r)<\infty$ for all $r \in \R$. As in the proof of the equivalence between \eqref{eq:exp integral test} and \eqref{eq:exp series test} and (\ref{eqn:useestimate})
we conclude that $U_a(r) \leq C e^{\gamma r}$ for all $r \geq 0$ and some constant $C>0$
where $\gamma$ is the minimal positive root of the equation $\E [e^{-\gamma X_1}] = e^{-a}$.
By \eqref{eq:inclusions for rho_r}, for $r \geq 0$,
\begin{eqnarray*}
\int_0^\infty e^{at} \Prob\{\varrho_r > t\} \, \dt
& \leq &
\int_0^\infty e^{at}\Prob\big\{X_t+\inf_{s \geq 0}(X_{t+s}-X_t)\leq r\big\} \, \dt  \\
& = &
\E \Big[U_a\Big(r-\inf_{s \geq 0}(X_{t+s}-X_t)\Big)\Big]
~=~ \E \Big[U_a\Big(r-\inf_{t \geq 0}X_t\Big)\Big]  \\
& \leq &
Ce^{\gamma r}\E\big[e^{-\gamma \inf_{t \geq 0} X_t}\big]
~<~ \infty
\end{eqnarray*}
where the last inequality follows from Lemma \ref{Lem:supremum}(b)
which applies because $\E [e^{-\gamma X_1}] = e^{-a} < 1$.
\end{proof}

\begin{Rem} \label{Rem:perturbed random walks}
In this remark, we briefly sketch another method that can be used to prove
Theorems \ref{Thm:1st passage and sojourn time} and \ref{Thm:rho}, namely, by drawing a connection to perturbed random walks.
For instance, in order to see that \eqref{eq:E^aN^1<infty} implies \eqref{eq:E^aN_r<infty},
define $I_n := \inf_{n-1 \leq t \leq n} (X_t-X_{n-1})$.
Then $(X_1,I_1), (X_2-X_1,I_2), \ldots$ are i.i.d.\ $2$-dimensional random vectors
and $(\underline{X}_n)_{n \geq 0}$ with $\underline{X}_0 := 0$ and $\underline{X}_n := X_{n-1} + I_n$, $n \in \N$
is a perturbed random walk in the sense of \cite{Alsmeyer+Iksanov+Meiners:2014}.
If $\E[e^{a N^1_r}] < \infty$ for some $r \geq 0$, then $0 < a \leq R$ and, in particular,
$-\varphi'(0) = \E[X_1] \in (0,\infty]$.
Hence $\lim_{t \to \infty} X_t = \infty$ a.s.\ by \cite[Theorem 35.5]{Sato:1999}.
Consequently, $\lim_{n \to \infty} \underline{X}_n = +\infty$ a.s.
Therefore, we can conclude from \cite[Theorem 2.6]{Alsmeyer+Iksanov+Meiners:2014} that
$\E[e^{a\underline{N}_r}] < \infty$ where $\underline{N}_r = \sum_{n \geq 0} \1_{\{\underline{X}_n \leq r\}}$.
This implies \eqref{eq:E^aN_r<infty} since
\begin{equation*}
N_r ~=~ \int_0^{\infty} \1_{\{X_t \leq r\}} \, \dt  ~\leq~  \sum_{n \geq 0} \1_{\{\inf_{n \leq t \leq n+1} X_t \leq r\}}    ~=~ 1+\underline{N}_r.
\end{equation*}
Analogously, one can see that \eqref{eq:Ee^a rho^1_r<infty} implies \eqref{eq:Ee^a rho_r<infty}
by using the same perturbed random walk $(\underline{X}_n)_{n \in \N_0}$ and the inequality
$\varrho_r \leq \underline{\rho}(r)$ with $\underline{\rho}(r) := \sup\{n \in \N_0: \underline{X}_n \leq r\}$.
Indeed, $\E[e^{a \underline{\rho}(r)}] < \infty$ follows from \eqref{eq:Ee^a rho^1_r<infty}, \cite[Theorem 2.7(b)]{Alsmeyer+Iksanov+Meiners:2014}
and $\E[e^{-\gamma I_1}]<\infty$,
which is contained in Lemma \ref{Lem:supremum}(b).
\end{Rem}

Recall that in the proofs of Theorems \ref{Thm:asymptotics Ee^aT_r} and \ref{Thm:asymptotics Ee^a rho_r},
we exclude the case that $X$ is a compound Poisson process.

Further, notice that
\eqref{eq:change of measure} extends to a.s.\ finite $\F$-stopping times $T$ (\textit{cf.} \cite[Corollary 3.11]{Kyprianou:2014})
\begin{equation}    \label{eq:stopping time change of measure}
\frac{{\rm d}\Prob^{\gamma}}{{\rm d}\Prob}\bigg|_{\F_T} ~=~ e^{-\gamma X_T+aT}.
\end{equation}
For $T=T_r$ (which is finite a.s.\ in the situation considered here), this yields
\begin{equation}    \label{eq:T_r change of measure}
\E [e^{aT_r}]   ~=~ \E^{\gamma} [e^{\gamma X_{T_r}}]    \quad   \text{for } r>0.
\end{equation}

\begin{proof}[Proof of Theorem \ref{Thm:asymptotics Ee^aT_r}]
Assume that the equivalent conditions of Theorem \ref{Thm:1st passage and sojourn time} hold.

(a) For each $t\geq 0$, set $H_t:=X_{L_t^{-1}}$.
Recall that, under $\Prob$, $(L_t^{-1}, H_t)_{t\geq 0}$ is a two-dimensional subordinator (without killing).
Further, note that $\E [e^{aL_1^{-1}}] \in (1,\infty)$ by Theorem \ref{Thm:1st passage and sojourn time}.
Since $L_1^{-1}$ and $T_r$ are $\F$-stopping times, \eqref{eq:stopping time change of measure} gives
\begin{equation}    \label{eq:measure change L_1^-1}
\E^{\gamma} [e^{\gamma H_1}]    ~=~ \E[e^{aL_1^{-1}}]   \in (1,\infty)
\end{equation}
and
\begin{equation}    \label{eq:measure change T_r}
\E [e^{aT_r}]   ~=~ \E^{\gamma} [e^{\gamma X_{T_r}}]    ~=~ \E^{\gamma} [e^{\gamma H_{\tau_r}}] ~<~ \infty, \quad   r>0,
\end{equation}
where $\tau_r:=\inf\{t\geq 0: H_t>r\}$.
The second equality is a consequence of $X_{T_r}=H_{\tau_r}$ $\Prob^{\gamma}$-a.s.
Under $\Prob^{\gamma}$, $(H_t)_{t \geq 0}$ is still a subordinator.
Furthermore, \eqref{eq:measure change L_1^-1} entails $\E^{\gamma} [H_1] < \infty$ which,
in turn, implies that under $\Prob^{\gamma}$, $(H_t)_{t \geq 0}$ is a subordinator without killing.

If $X$ is spectrally negative, then $X_{T_r}=r$ under $\Prob^{\gamma}$ in which case
$\E [e^{aT_r}] = e^{\gamma r}$.
In Section \ref{sec:Exa} it is shown that the result in this
particular case fits the general asymptotics stated in the
theorem. In what follows we assume that $X$ is not spectrally
negative. Let $H$ be a random variable with distribution
\begin{equation*}
\Prob^{\gamma}\{H\in \dx\}
= \frac{d_\gamma}{\E^{\gamma} [H_1]} \delta_0(\dx)
+ \frac{1}{\E^{\gamma} [H_1]} \Pi^{\gamma}((x,\infty)) \1_{(0,\infty)}(x) \, \dx
\end{equation*}
where
$d_\gamma$ and $\Pi^{\gamma}$ are the drift and the L\'{e}vy
measure of the (infinitely divisible) $\Prob^{\gamma}$-law of $H_1$.
Then
\begin{eqnarray}
\E^{\gamma} [e^{\gamma H}]
& = &
\frac{d_\gamma}{\E^{\gamma} [H_1]}
+ \frac{1}{\E^{\gamma} [H_1]} \int_0^\infty e^{\gamma x} \, \Pi^{\gamma}((x,\infty)) \, \dx \label{eq:E^gamma e^gammaH} \\
& = &
\frac{1}{\gamma \E^{\gamma} [H_1]} \bigg(d_\gamma \gamma
+ \int_{(0,\infty)} \!\! (e^{\gamma x}-1) \, \Pi^{\gamma}(\dx)\bigg)
~=~ \frac{\log\E^{\gamma} [e^{\gamma H_1}]}{\gamma \E^{\gamma}[H_1]}
~<~ \infty, \notag
\end{eqnarray}
where the finiteness follows from \eqref{eq:measure change L_1^-1}.
Recalling \eqref{eq:measure change T_r} we conclude that it remains to prove that
\begin{equation*}
\lim_{r \to \infty}\E^{\gamma} [e^{\gamma(H_{\tau_r}-r)}]   ~=~ \E^{\gamma} [e^{\gamma H}].
\end{equation*}
To this end, we shall use the identity
\begin{equation*}
\Prob\{H_{\tau_r}-r>s\}
~=~ \int_{(s,\,r+s]} \!\!\! (U_\gamma(r)-U_\gamma(r+s-t)) \, \Pi^{\gamma}(\dt)
+ U_\gamma(r) \Pi^{\gamma}((r+s,\infty))
\end{equation*}
for $s \geq 0$ where $U_\gamma(\dx) = \int_0^\infty\Prob^{\gamma}\{H_t\in \dx\} \, \dt$
and the relation
\begin{eqnarray*}
\lim_{r \to \infty} \Prob\{H_{\tau_r}-r>s\}
& = &
\lim_{r \to \infty}\int_{(s,\,r+s]} \!\!\! (U_\gamma(r)-U_\gamma(r+s-t)) \, \Pi^{\gamma}(\dt)   \\
& = &
\Prob^{\gamma}\{H>s\}
\end{eqnarray*}
for $s \geq 0$, both of which can be found in the proof of Theorem 1 in \cite{Bertoin+Harn+Steutel:1999}.
With these at hand, we write
\begin{eqnarray}
\gamma^{-1} \E^{\gamma} [e^{\gamma (H_{\tau_r}-r)}-1]
& = &
\int_0^\infty e^{\gamma s} \, \Prob^{\gamma}\{H_{\tau_r}-r>s\} \, \ds \notag    \\
& = &
\int_0^\infty e^{\gamma s} \int_{(s,\,r+s]} (U_\gamma(r)-U_\gamma(r+s-t)) \, \Pi^{\gamma}(\dt) \, \ds \notag    \\
& &
+ U_\gamma(r)\int_0^\infty e^{\gamma s} \Pi^{\gamma}((r+s,\infty)) \, \ds \notag    \\
& =: &
A(r)+B(r). \label{eqn:betterendofproof}
\end{eqnarray}
Now we use the representation
\begin{equation*}
B(r)    ~=~ e^{-\gamma r}U_\gamma(r)\int_r^\infty e^{\gamma y} \, \Pi^{\gamma}((y,\infty)) \, \dy
\end{equation*}
to infer $\lim_{r \to \infty} B(r)=0$ because $\lim_{r \to \infty}\int_r^\infty e^{\gamma y}\Pi^{\gamma}((y,\infty)) \dy=0$
which follows from \eqref{eq:E^gamma e^gammaH},
and $\lim_{r \to \infty} r^{-1} U_\gamma(r) = (\E^{\gamma} [H_1])^{-1}$.
Using subadditivity of $U_\gamma$ we have
\begin{equation*}
\int_{(s,\,r+s]} (U_\gamma(r)-U_\gamma(r+s-t)) \, \Pi^{\gamma}(\dt)
~\leq~  \int_{(s,\,\infty)}U_\gamma(t-s) \, \Pi^{\gamma}(\dt).
\end{equation*}
Furthermore,
\begin{align*}
\int_0^\infty & e^{\gamma s} \int_{(s,\infty)} U_\gamma(t-s) \, \Pi^{\gamma}(\dt) \, \ds    \\
& =~
\int_{(0,\infty)} e^{\gamma t} \int_0^t e^{-\gamma y} \, U_\gamma(y) \, \dy \, \Pi^{\gamma}(\dt)    \\
& \leq~
U_\gamma(1)\int_{(0,1)} te^{\gamma t} \, \Pi^{\gamma}(\dt) + \int_0^\infty e^{-\gamma y}U_\gamma(y) \, \dy \, \int_{[1,\infty)} e^{\gamma t} \, \Pi^{\gamma}(\dt)
~<~ \infty.
\end{align*}
Here, the first integral in the last line is finite because
$\Pi^{\gamma}$ is a L\'{e}vy measure which must satisfy $\int_{(0,1)} t \, \Pi^{\gamma}(\dt)<\infty$.
The finiteness of the second integral follows from the fact that $\lim_{r \to \infty} r^{-1} U_\gamma(r) = (\E^{\gamma} [H_1])^{-1}$,
while the finiteness of the third integral follows from \eqref{eq:E^gamma e^gammaH}.
Therefore,
\begin{equation*}
\lim_{r \to \infty} A(r)
~=~ \int_0^\infty e^{\gamma s}\Prob^{\gamma}\{H>s\} \, \ds
~=~ \gamma^{-1}\E^{\gamma} [e^{\gamma H}-1]
\end{equation*}
by the dominated convergence theorem. In view of (\ref{eqn:betterendofproof}) the proof is complete.

(b) For $r \in \R$, set $f(r):=\E [e^{aN_r}]$ and $g(r) := e^{\gamma r} f(-r)$.
Using the decomposition
\begin{equation*}
N_r ~=~ T_r+\int_{T_r}^\infty \1_{\{X_t-X_{T_r}\leq r-X_{T_r}\}} \, \dt
\end{equation*}
and recalling that $\int_{T_r}^\infty \1_{\{X_t-X_{T_r}\leq s\}} \, \dt$ is independent of $(T_r, X_{T_r})$
and has the same law as $N_s$ we infer
\begin{equation*}
f(r)    ~=~ \E [e^{aT_r}f(r-X_{T_r})]   ~=~ e^{\gamma r}\E^{\gamma} [g(X_{T_r}-r)].
\end{equation*}

If $X$ is spectrally negative,
then $f(r) = e^{\gamma r}\E [e^{aN_0}]$.
Suppose $X$ is not spectrally negative.
Then $g(X_{T_r}-r)\leq e^{\gamma(X_{T_r}-r)}f(0)$ a.s.
From the proof of part (a), we know that
$\lim_{r \to \infty}e^{\gamma(X_{T_r}-r)}f(0) = e^{\gamma H} f(0)$ in $\Prob^{\gamma}$-distribution and
$\lim_{r \to \infty}\E [e^{\gamma(X_{T_r}-r)}] f(0) = \E^{\gamma} [e^{\gamma H}] f(0)$. The
function $f$ is nondecreasing, hence it has countably many
discontinuities. Since the law of $H$ under $\Prob^{\gamma}$ is absolutely continuous on
$(0,\infty)$ we have
$\lim_{r \to \infty}g(X_{T_r}-r)=g(H)$ in
$\Prob^{\gamma}$-distribution.
Now the desired conclusion $\lim_{r \to \infty}\E^{\gamma} [g(X_{T_r}-r)] = \E^{\gamma} [g(H)]$
follows from Pratt's lemma which is a (slightly more general) version of the dominated convergence theorem.
\end{proof}

\begin{proof}[Proof of Theorem \ref{Thm:asymptotics Ee^a rho_r}]
Assume that the equivalent conditions in Theorem \ref{Thm:rho} hold, in particular,
$U_a(r)<\infty$ for every $r\in\R$.
Furthermore, either $a \in (0,R)$ or $a=R$ and $\E[X_1e^{-\gamma X_1}]>0$.
We shall use the probability measure $\Prob^{\gamma}$ defined in \eqref{eq:change of measure}.
In view of \eqref{eq:E_gamma[X_1]} and the discussion following it we have
\begin{equation}    \label{eq:nu_gamma1}
\nu_{\gamma}    ~:=~    \E^{\gamma} [X_1]    ~=~    e^a\E [X_1e^{-\gamma X_1}] ~\in~   (0,\infty).
\end{equation}
For $r \in \R$, we write $U_a(r)$ in the following form
\begin{equation}    \label{eq:exp_renewal_measure_in_terms_of_U_theta}
U_a(r)
~=~\int_0^\infty \E^{\gamma} [e^{\gamma X_t} \1_{\{X_t \leq r\}}] \, \dt \\
~=~ \int_{(-\infty,\,r]} \!\! e^{\gamma x} \, U^{\gamma}(\dx)
\end{equation}
where $U^{\gamma}(\dx):=\int_0^\infty\Prob^{\gamma}\{X_t\in \dx\} \, \dt$
denotes the potential measure of $X$ under $\Prob^{\gamma}$.
It is well-known (and can be checked by a simple calculation)
that $U^{\gamma}=Z^{\gamma}-\delta_0$
where $\delta_0$ is the Dirac measure with mass 1 at the point $0$ and
$Z^{\gamma}(\cdot) := \sum_{n \geq 0} \Prob^{\gamma}\{X_{\tau_n} \in \cdot\}$ is
the renewal measure of the zero-delayed random walk
$(X_{\tau_n})_{n\in\N_0}$ where $\tau_0 := 0$ and $(\tau_n-\tau_{n-1})_{n \in \N}$
is a sequence of i.i.d.\ exponential random variables with unit mean independent of $X$.
In particular,
\begin{equation*}
\Prob^{\gamma}\{X_{\tau_1} \in\cdot\}   ~=~ \int_0^\infty e^{-t}\Prob^{\gamma}\{X_t\in\cdot\} \, \dt.
\end{equation*}
Observe that $\E^{\gamma} [X_{\tau_1}] = \nu_\gamma$.
From this it is clear that the asymptotic behavior of $U_a(r)$ as $r \to \infty$
coincides with that of
$\int_{(-\infty,\,r]} e^{\gamma x} \, Z^{\gamma}(\dx)$.

Since we exclude the case that $X$ is a compound Poisson process,
the distribution of $X_{\tau_1}$ under $\Prob^{\gamma}$ is non-arithmetic.
Further, the function $x \mapsto e^{-\gamma x} \1_{[0,\infty)}(x)$ is directly Riemann integrable.
We can, therefore, invoke the key renewal theorem on the whole line to conclude that
\begin{eqnarray*}
e^{-\gamma r} \int_{(-\infty,\,r]} e^{\gamma x} \, Z^{\gamma}(\dx)
& = &
\int e^{-\gamma(r-x)} \1_{[0,\infty)}(r-x) \,Z^{\gamma}(\dx)    \\
& \underset{r \to \infty}{\longrightarrow} &
\frac{1}{\nu_{\gamma}} \int_0^{\infty} e^{-\gamma x} \, dx
~=~ \frac{1}{\gamma \nu_{\gamma}},
\end{eqnarray*}
where we have used $\nu_{\gamma} > 0$.
This in combination with \eqref{eq:nu_gamma1} implies \eqref{eq:asymptotics_U_a(r)}.

Regarding \eqref{eq:asymptotics_Ee^arho_r}, we use \eqref{eq:inclusions for rho_r} for $r\geq0$
to conclude that
\begin{eqnarray*}
\frac{1}{a} \E [e^{a\varrho_r}-1]
& = &
\int_0^\infty e^{at}\Prob\{\varrho_r>t\} \, \dt
~\leq~  \int_0^\infty e^{at} \Prob\{\inf\!_{s \geq t} X_s \leq r\} \, \dt   \\
& = &
\int_0^\infty e^{at}\Prob\{X_t+I_t'\leq r\} \, \dt
~=~ \E [U_a(r-I)]
\end{eqnarray*}
where $I_t':= \inf_{s \geq t} (X_s-X_t)$ has the same law as $I=\inf_{s \geq 0} X_s$ and is independent of $X_t$.
Similarly, using the lower bound provided by \eqref{eq:inclusions for rho_r}, we get
\begin{eqnarray*}
\frac{1}{a} \E [e^{a\varrho_r}-1]
~\geq~  \int_0^\infty e^{at} \Prob\{\inf\!_{s \geq t} X_s < r\} \, \dt
~=~ \E [U_a((r-I)-)]
\end{eqnarray*}
where $U_a(s-) := \sum_{n \geq 0} e^{an} \Prob\{X_n < s\}$, $s \in \R$.
The argument used above that reveals the asymptotic behavior of $U_a(s)$ as $s \to \infty$
also shows that $U_a(s-)$ exhibits the same asymptotic behavior.
Now \eqref{eq:asymptotics_Ee^arho_r} follows from \eqref{eq:asymptotics_U_a(r)}
from the dominated convergence theorem and $\E[e^{-\gamma I}]<\infty$ (see Lemma \ref{Lem:supremum}).
\end{proof}

\section{Particular cases and examples} \label{sec:Exa}

We begin the section with the proof of Remark \ref{Rem:CPP}.

\begin{proof}[Proof of Remark \ref{Rem:CPP}]
Observe that
\begin{equation*}
\Prob\{ T_r>t \} ~=~ \sum_{n \geq 0} \Prob\{ N(t) = n\}  \Prob\{ \tau(r)>n \}.
\end{equation*}
Multiplying by $e^{at}$ and integrating w.r.t.\ $t$ gives
\begin{eqnarray*}
\frac{1}{a}\E[ e^{a T_r} - 1]
& = &
\int_0^\infty e^{a t} \Prob\{ T_r>t \} \, \dt   \\
& = &
\sum_{n \geq 0} \int_0^\infty \frac{(\lambda t)^n}{n!} e^{-\lambda t} e^{a t} \, \dt \, \Prob\{ \tau(r)>n \}    \\
& = &
\frac{1}{\lambda-a} \sum_{n \geq 0} \Big(\frac{\lambda}{\lambda-a}\Big)^{\!n} \, \Prob\{ \tau(r)>n \}   \\
& = &
\frac{1}{\lambda-a}\sum_{n \geq 0} e^{b n} ~ \Prob\{ \tau(r)>n \}   \\
& = &
\frac{1}{a} \E[e^{b\tau(r)} - 1]
\end{eqnarray*}
where we used Fubini's theorem for nonnegative integrands to interchange summation and integration.
Fubini's theorem in particular implies that the left-hand side is finite if and only if the right hand side is.

The observation for $N_r$ is even simpler:
Note that for any $n$ with $S_n \leq r$, the corresponding
compound Poisson process spends an exponentially distributed time (independent of $(S_n)_{n \in \N_0}$) below the level $r$.
Therefore, with $(e_k)_{k \in \N}$ denoting a sequence of i.i.d.\ exponentials with mean $1/\lambda$
which is independent of $n(r)$, we have
\begin{equation*}
N_r     ~\stackrel{\mathrm{d}}{=}~  \sum_{k=1}^{n(r)} e_k.
\end{equation*}
From this we readily derive
\begin{equation*}
\E[e^{a N_r }]  ~=~ \E \Big[\Big( \frac{\lambda}{\lambda-a} \Big)^{\!n(r)} \Big]    ~=~ \E[e^{b n(r)}].
\end{equation*}

For the relation between $\varrho_r$ and $\rho(r)$ one proceeds as for $T_r$:
\begin{equation*}
\Prob\{\varrho_r>t \} ~=~ \sum_{n \geq 0} \Prob\{ N(t) = n\}  \Prob\{ \rho(r)>n \},
\end{equation*}
from which the desired relation follows.
\end{proof}

\begin{Exa}[Spectrally negative L\'evy processes]   \label{Exa:Spectrally negative Levy processes}
Let $X$ be spectrally negative, $0<a\leq R=-\log \inf_{t \geq 0} \varphi(t)$ and $\gamma$ as in \eqref{eq:Esscher}.
Then
\begin{eqnarray}
\E [e^{aT_r}]       & = &   e^{\gamma r},   \quad   r \geq 0,       \label{eq:exact T_r for spectrally negative}    \\
\E [e^{aN_r}]       & = &   e^{\gamma r} \gamma a^{-1} \E [X_1],    \quad   r \geq 0,   \label{eq:exact N_r for spectrally negative}    \\
\E [e^{a\varrho_r}] & = &   e^{\gamma r} \frac{e^{-a}\E [X_1]}{\E[X_1 e^{-\gamma X_1}]},    \quad   r \geq 0        \label{eq:exact rho_r for spectrally negative}
\end{eqnarray}
where the last relation holds whenever $a\in (0,R)$ or $a=R$ and $\E [X_1e^{-\gamma X_1}] > 0$.

Before we prove relations \eqref{eq:exact T_r for spectrally negative}, \eqref{eq:exact N_r for spectrally negative}
and \eqref{eq:exact rho_r for spectrally negative},
notice that since $X$ is spectrally negative we have $L_r^{-1}=T_r$
and $X_{L_r^{-1}}=r$ $\Prob$-a.s.\ and $\Prob^{\gamma}$-a.s.
Hence using \eqref{eq:exact T_r for spectrally negative} we infer
\begin{equation*}
\frac{\log \E [e^{aL_1^{-1}}]}{\gamma \E^{\gamma} [X_{L_1^{-1}}]}
~=~ \frac{\log \E[e^{aT_1}]}{\gamma}    ~=~ 1.
\end{equation*}
This shows that \eqref{eq:exact T_r for spectrally negative} is in full agreement with \eqref{eq:asymptotics_Ee^aT_r}.

Using \eqref{eq:exact spectrally negative}
we further see that the asymptotic behavior in \eqref{eq:exact rho_r for spectrally negative} agrees with that in \eqref{eq:asymptotics_Ee^arho_r}.

\begin{proof}[Proof of \eqref{eq:exact T_r for spectrally negative}]
Since $X$ is spectrally negative, $X_{T_r} = r$ $\Prob$-a.s.\ and $\Prob^{\gamma}$-a.s.\ for all $r > 0$.
Hence \eqref{eq:exact T_r for spectrally negative} is a consequence of \eqref{eq:T_r change of measure}.
\end{proof}

In the proof of \eqref{eq:exact rho_r for spectrally negative}, we make explicit the dependence of $\gamma$ on $a$.
To be more precise, let $\gamma_0$ denote the unique positive real with $\varphi(\gamma_0)=e^{-R}$
where $R = -\log \inf_{\theta \geq 0} \varphi(\theta)$.
Then $-\log \varphi: [0,\gamma_0] \to [0,R]$ is a bijection.
Let $\gamma: [0,R] \to [0,\gamma_0]$ denote its inverse, so $\varphi(\gamma(a)) = e^{-a}$.
Note for later use that differentiating the latter relation with respect to $a$ and solving for $\gamma'(a)$ gives
\begin{equation}    \label{eq:gamma'(a)}
\gamma'(a)  ~=~ \frac{e^{-a}}{-\varphi'(\gamma(a))} ~=~ \frac{e^{-a}}{\E[X_1 e^{-\gamma(a) X_1}]}
\end{equation}
for all $a$ for which $\E[X_1 e^{-\gamma(a) X_1}]$ is finite and positive.
The set of these $a$ includes the interval $(0,R)$ and, additionally, the point $R$ when \eqref{eq:a<R or a=R+} holds.

\begin{proof}[Proof of \eqref{eq:exact rho_r for spectrally negative}]
Observe that using spectral negativity, for $r \geq 0$,
\begin{equation}    \label{eq:Ee^arho decomposed for spectrally negative}
\E [e^{a\varrho_r}] ~=~ \E [e^{aT_r}] \E [e^{a\varrho_0}].
\end{equation}
To see this, it suffices to decompose $\varrho_r$ in the following form
\begin{eqnarray*}
\varrho_r
& = &
T_r + \sup\{t \geq 0: X_{T_r+t}-X_{T_r}+X_{T_r} \leq r\}    \\
& = &
T_r+\sup\{t \geq 0: X_{T_r+t}-X_{T_r}\leq 0\}
\end{eqnarray*}
and to note that the second term is independent of $T_r$ and has the same law as $\varrho_0$.
Further we claim that since $X$ is spectrally negative, for $r \geq 0$,
\begin{equation}    \label{eq:inclusions for rho_r for spectrally negative X}
\Prob\{\varrho_r>t\} ~\leq~ \Prob\{\inf\!_{s \geq t} X_s \leq r\} ~\leq~ \Prob\{\varrho_r \geq t\}
\end{equation}
for all $t \geq 0$. Indeed, $\Prob\{\varrho_r>t\} \leq \Prob\{\inf\!_{s \geq t} X_s \leq r\}$ by \eqref{eq:inclusions for rho_r}.
To see that the second inequality holds, it is enough to show that $\Prob\{\inf\!_{s \geq t} X_s \leq r, \varrho_r < t\} = 0$ for all $t \geq 0$.
The latter follows from the fact that when $\inf\!_{s \geq t} X_s \leq r$, then there is an $s \geq t$ with $X_s \leq r$ which implies $\varrho_r \geq s$
or there is a sequence $(s_n)_{n \in \N}$ with $s_n \geq t$ and $X_{s_n} > r$ for all $n \in \N$, but $\lim_{n \to \infty} X_{s_n} = r$.
We can assume without loss of generality that $(s_n)_{n \in \N}$ is monotone and hence $s:=\lim_{n \to \infty} s_n$ exists in $[t,\infty)$
($s<\infty$ since in the given situation, $X$ drifts to $+\infty$ a.s.).
If $(s_n)_{n \in \N}$ is decreasing, then, by the right-continuity of the paths, $X_s = r$ and we are in the first case.
If $(s_n)_{n \in \N}$ is increasing, then again $X_s \leq r$ by the absence of positive jumps.
Consequently, since the probabilities on the left and right of \eqref{eq:inclusions for rho_r for spectrally negative X} coincide for all but countably many $t$,
\begin{align*}
\frac{1}{a} \E & [e^{a \varrho_r}-1]
~=~
\int_0^\infty e^{at}\Prob\{\inf\!_{s \geq t} X_s \leq r\} \, \dt    \\
&=~
\int_0^\infty \!\!\! e^{at}\Prob\{X_t< 0\} \, \dt
+ \int_0^\infty \!\!\! e^{at}\int_{[0,\infty)} \!\!\! \Prob\{\inf\!_{s\geq 0}\,X_s\leq r-x\}\Prob\{X_t\in \dx\} \, \dt  \\
& =:~
I(a)+J(r).
\end{align*}
To calculate $J(r)$ we make essential use of the identity
\begin{equation*}
t \Prob\{T_x\in \dt\} \, \dx    ~=~ x \Prob\{X_t \in \dx\} \, \dt,  \quad   (x,t) \in [0,\infty) \times [0,\infty),
\end{equation*}
see \cite[Corollary VII.3]{Bertoin:1996}.
With this notation,
\begin{eqnarray*}
J(r)
& = &
\int_0^\infty x^{-1}\Prob\{\inf\!_{s\geq 0}\,X_s\leq r-x\} \int_{[0,\infty)}te^{at}\Prob\{T_x\in \dt\} \, \dx       \\
& = &
\int_0^\infty x^{-1}\Prob\{\inf\!_{s\geq 0}\,X_s\leq r-x\}\frac{\partial}{\partial a} \E [e^{aT_x}] \, \dx  \\
& = &
\gamma^\prime(a)\int_0^\infty \Prob\{\inf\!_{s\geq 0}\,X_s\leq r-x\}e^{\gamma(a) x} \, \dx      \\
& = &
\gamma^\prime(a) \E \bigg[\int_0^{r-\inf\!_{s\geq 0}\,X_s} e^{\gamma(a) x} \, \dx \bigg]
~=~ \frac{\gamma^\prime(a)}{\gamma(a)} \big(e^{\gamma(a) r} \E [e^{-\gamma(a)\inf_{s\geq 0}\,X_s}] -1\big)
\end{eqnarray*}
having utilized \eqref{eq:exact T_r for spectrally negative} for the third equality. In view of
\eqref{eq:Ee^arho decomposed for spectrally negative}, we have
\begin{eqnarray}
I(a)
& = &
\frac{1}{a}\E[e^{a \varrho_r}-1] - J(r)     \notag  \\
& = &
\frac{1}{a} \big(e^{\gamma(a) r} \E[e^{\varrho_0}] - 1 \big) - \frac{\gamma^\prime(a)}{\gamma(a)} \big(e^{\gamma(a) r} \E [e^{-\gamma(a)\inf_{s\geq 0}\,X_s}] -1\big)   \notag  \\
& = &
\frac{\gamma^\prime(a)}{\gamma(a)}-\frac{1}{a}      \label{eq:I(a) calculated}
\end{eqnarray}
since $I(a)$ does not depend on $r$.
Further,
\begin{equation*}
\E [e^{a\varrho_r}]
~=~ e^{\gamma(a)r} \frac{a\gamma^\prime(a)}{\gamma(a)} \E [e^{-\gamma(a) \inf\!_{s\geq 0}\,X_s}].
\end{equation*}
According to \eqref{eq:exact spectrally negative} and \eqref{eq:gamma'(a)},
we have $\E[e^{-\gamma(a)\inf_{s\geq 0}\,X_s}] = \gamma(a) \E [X_1] / a$
and $\gamma'(a) = e^{-a}/\E[X_1 e^{-\gamma(a) X_1}]$
which completes the proof of \eqref{eq:exact rho_r for spectrally negative}.
\end{proof}

\begin{proof}[Proof of \eqref{eq:exact N_r for spectrally negative}]
The same argument as for \eqref{eq:Ee^arho decomposed for spectrally negative} yields
\begin{equation}    \label{eq:Ee^aN decomposed for spectrally negative}
\E[e^{aN_r}]    ~=~ \E[e^{aT_r}] \E[e^{a N_0}]
\end{equation}
for $a\in [0,R]$.
Letting $f(a) := \E[e^{aN_0}]$, taking logarithms on both sides of \eqref{eq:e^aN_0}
and then differentiating with respect to $a \in (0,R]$, we infer
\begin{equation*}
(\log f(a))^\prime
~=~ \int_0^\infty e^{at}\Prob\{X_t\leq 0\} \, \dt
~=~ I(a)    ~=~ (\log \gamma(a)-\log a)^\prime
\end{equation*}
having used \eqref{eq:I(a) calculated} for the last equality.
Hence $f(a)=c\gamma(a)/a$ for some constant $c>0$.
$f(0)=1$ and $\lim_{a \downarrow 0} \gamma(a)/a = \gamma'(0) = 1/\E[X_1]$
imply $c=\E[X_1]$.
\end{proof}
\end{Exa}

\begin{Exa}[Stable subordinators]   \label{Exa:Stable subordinators}
Let $X$ be an $\alpha$-stable subordinator, $\alpha\in (0,1)$
with Laplace exponent $\Psi(-\theta) = -\theta^\alpha$, $\theta\geq 0$.
The process $(T_r)_{r \geq 0}$ is called an inverse $\alpha$-stable subordinator.
It is well known (see \cite[Proposition 1(a)]{Bingham:1971}) that $T_r$ has a Mittag-Leffler distribution with moments
$\E [T_r^n] = r^{n\alpha} n!/\Gamma(1+n\alpha)$, $n \in \N_0$
where $\Gamma(\cdot)$ is the gamma function.
Hence, for any $a \geq 0$ and $r \geq 0$
\begin{equation*}
\E [e^{aT_r}] ~=~ \sum_{n\geq 0} \frac{(ar^\alpha)^n}{\Gamma(1+n\alpha)}    ~=~ E_{\alpha}(ar^{\alpha}) ~<~ \infty
\end{equation*}
where $E_{\alpha}(\cdot)$ denotes the Mittag-Leffler function with parameter $\alpha$. Note that (by \cite{Bingham+Goldie+Teugels:1989}, p.\ 315) this is in accordance with the asymptotics stated in \eqref{eq:asymptotics_Ee^aT_r}.
\end{Exa}

\begin{Exa}[Brownian motion with drift] \label{Exa:Brownian motion with drift}
For $\mu > 0$ and a standard Brownian motion $(B_t)_{t \geq 0}$, let $X_t = \mu t + B_t$, $t \geq 0$.
According to \cite[Example 46.6]{Sato:1999}, $(T_r)_{r \geq 0}$
is an inverse Gaussian subordinator with distribution
\begin{equation*}
\Prob\{T_r \in \dy\} ~=~ \frac{re^{\mu r}}{\sqrt{2\pi}} e^{-\mu^2y/2-r^2/(2y)} y^{-3/2}\1_{(0,\infty)}(y) \, \dy.
\end{equation*}
This implies $\E [e^{aT_r}]<\infty$ iff $a \leq \mu^2/2$.
This is in full agreement with Theorem \ref{Thm:1st passage and sojourn time}
because $X_1$ has Laplace exponent $-\log \varphi(\theta) = \mu \theta - \theta^2/2$, $\theta \geq 0$.
This function attains its supremum at $\mu$, hence $R= \sup_{\theta \geq 0} (\mu \theta - \theta^2/2) = \mu^2/2$.
Finally, for $a \leq \mu^2/2$, in view of \eqref{eq:exact T_r for spectrally negative},
\begin{equation*}
\E [e^{aT_r}]   ~=~ e^{(\mu-\sqrt{\mu^2-2a})r}.
\end{equation*}

According to \cite[Formula 1.5.4(1) on p.\;204]{Borodin+Salminen:1996}
\begin{equation*}
\Prob\{N_r \in \dy\}
~=~ \bigg(\frac{\mu\sqrt{2}}{\sqrt{\pi y}}e^{-(r-\mu y)^2/(2y)} -\frac{2\mu^2e^{2r\mu}}{\sqrt{\pi}}\int_s^{\infty}e^{-x^2} \dx \bigg)\1_{(0,\infty)}(y) \, \dy,
\end{equation*} where
$s=r/\sqrt{2y}+\mu\sqrt{y/2}$. We only give detailed
calculations for the case $r=0$ and denote the corresponding
density by $f(y)$.
Using
\begin{equation*}
\frac{e^{-s^2}}{2s}-\int_s^\infty e^{-x^2}\dx ~\sim~ \frac{e^{-s^2}}{4s^3}      \quad   \text{as }  s \to \infty,
\end{equation*}
which can be obtained using L'H\^opital's rule, we infer
\begin{equation*}
e^{\mu^2 y/2} f(y) ~\sim~ {\rm const}\,y^{-3/2}     \quad   \text{as }  y \to \infty.
\end{equation*}
Thus, $\E [e^{aN(0)}] < \infty$ iff $a \leq \mu^2/2 = R$ in agreement with Theorem \ref{Thm:1st passage and sojourn time}.

Finally, for any $r \geq 0$, according to \cite[Point 31 on p.\;65]{Borodin+Salminen:1996}
\begin{equation*}
\Prob\{\varrho_r \in \dy\} ~=~ \frac{\mu}{\sqrt{2\pi y}}e^{\mu r-\mu^2y/2-r^2/(2y)} \1_{(0,\infty)}(y) \, \dy.
\end{equation*}
Therefore, $\E [e^{a\varrho_r}]<\infty$ iff $a<\mu^2/2=R$
which is in agreement with Theorem \ref{Thm:rho} because $\E [X_1e^{-\mu X_1}]=0$.
Finally, a quick calculation (using the characteristic function of a L\'evy distribution) shows that
\begin{equation*}
\E[e^{a \varrho_r}] ~=~ \frac{\mu}{\sqrt{\mu^2-2a}} e^{(\mu-\sqrt{\mu^2-2a})r}
\end{equation*}
whenever $a<R$. This confirms \eqref{eq:asymptotics_Ee^arho_r}.
\end{Exa}

\begin{appendix}
\footnotesize
\section{Auxiliary results}

The results summarized in the following lemma should be known.
We prove them because we have not been able to locate a proper reference.

\begin{Lemma}\label{Lem:supremum}
Define $I_t := \inf_{0 \leq s \leq t} X_s$ and $I := \inf_{t \geq 0} X_t$.
\begin{itemize}
    \item[(a)]
        If $\E [e^{-\theta X_1}] < \infty$ for some $\theta>0$, then $\E [e^{-\theta I_1}]<\infty$.
    \item[(b)]
        If $\E [e^{-\theta X_1}] < 1$ for some $\theta>0$,
        then $\E [e^{-\theta I}] < \infty$.
        Furthermore, if $X$ is spectrally negative, then
        \begin{equation}    \label{eq:exact spectrally negative}
        \E [e^{-\theta I}] ~=~ \frac{\theta \E [X_1]}{-\log \E [e^{-\theta X_1}]}.
        \end{equation}
\end{itemize}
\end{Lemma}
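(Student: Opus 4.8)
The plan is to pass to the dual L\'evy process $Y := -X$, for which $I_1 = -\overline{Y}_1$ and $I = -\overline{Y}_\infty$ with $\overline{Y}_t := \sup_{0 \leq s \leq t} Y_s$ and $\overline{Y}_\infty := \sup_{t \geq 0} Y_t$; thus (a) asks that $\E[e^{\theta Y_1}] < \infty$ imply $\E[e^{\theta \overline{Y}_1}] < \infty$, and (b) that $\E[e^{\theta Y_1}] < 1$ imply $\E[e^{\theta \overline{Y}_\infty}] < \infty$ (plus the explicit value when $X$ is spectrally negative, i.e.\ $Y$ has no negative jumps). For (a), the naive idea of dominating $e^{\theta \overline{Y}_1}$ by a constant times $\sup_{s \leq 1}$ of the unit-mean martingale $e^{\theta Y_s - s \log \E[e^{\theta Y_1}]}$ and invoking Doob fails at the critical exponent $\theta$, because $\E[\sup_{s \leq 1} M_s] < \infty$ would require $M_1 \in L\log L$, i.e.\ $\E[e^{\theta Y_1}(Y_1)^+] < \infty$, which does not follow from $\E[e^{\theta Y_1}] < \infty$. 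I would instead isolate the large positive jumps: write $Y = Y' + J$ with $J_t := \sum_{0 < s \leq t} \Delta Y_s \, \1_{\{\Delta Y_s > 1\}}$, a nondecreasing compound Poisson process (only finitely many such jumps on $[0,1]$ since the L\'evy measure of $Y$ is finite on $(1,\infty)$), and $Y'$ the remainder. By the L\'evy--It\^o decomposition $Y'$ and $J$ are independent, and $Y'$ is a L\'evy process whose jumps are bounded above by $1$; hence the L\'evy measure of $Y'$ puts no mass on $(1,\infty)$ and $\E[e^{\lambda Y'_1}] < \infty$ for every $\lambda \geq 0$. Applying Doob's $L^2$-inequality to the martingale $\big(e^{2\theta Y'_s - s\log \E[e^{2\theta Y'_1}]}\big)_{s \in [0,1]}$, which is $L^2$-bounded on $[0,1]$ because $\E[e^{4\theta Y'_1}] < \infty$, gives $\E[e^{2\theta \overline{Y'}_1}] < \infty$, hence $\E[e^{\theta \overline{Y'}_1}] < \infty$. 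Since $J$ is nondecreasing, $\sup_{s \leq 1} J_s = J_1$, and $\E[e^{\theta Y_1}] = \E[e^{\theta Y'_1}]\,\E[e^{\theta J_1}]$ with $\E[e^{\theta Y'_1}] \in (0,\infty)$ forces $\E[e^{\theta J_1}] < \infty$. As $\overline{Y}_1 \leq \overline{Y'}_1 + J_1$ with the two summands independent, $\E[e^{\theta \overline{Y}_1}] \leq \E[e^{\theta \overline{Y'}_1}]\,\E[e^{\theta J_1}] < \infty$, which is (a).

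For the first assertion in (b), I would cover $[0,\infty)$ by the unit intervals $[k,k+1]$: with $\xi_k := \sup_{k \leq s \leq k+1}(Y_s - Y_k)$ one has $\sup_{k \leq s \leq k+1} Y_s = Y_k + \xi_k$, so $\overline{Y}_\infty = \sup_{k \geq 0}(Y_k + \xi_k)$ and therefore $e^{\theta \overline{Y}_\infty} \leq \sum_{k \geq 0} e^{\theta(Y_k + \xi_k)}$. For each $k$, $\xi_k$ depends only on the increments of $Y$ after time $k$, hence is independent of $Y_k$, with $\xi_k \stackrel{d}{=} \overline{Y}_1$ and $\E[e^{\theta Y_k}] = (\E[e^{\theta Y_1}])^k$; consequently $\E[e^{\theta(Y_k+\xi_k)}] = (\E[e^{\theta Y_1}])^k\,\E[e^{\theta \overline{Y}_1}]$. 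Summing the geometric series, which converges because $\E[e^{\theta Y_1}] = \E[e^{-\theta X_1}] < 1$, and using part (a) for $\E[e^{\theta \overline{Y}_1}] < \infty$, one gets $\E[e^{\theta \overline{Y}_\infty}] \leq \E[e^{\theta \overline{Y}_1}]/(1 - \E[e^{\theta Y_1}]) < \infty$, i.e.\ $\E[e^{-\theta I}] < \infty$; in particular $I > -\infty$ a.s.\ as a byproduct.

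For the explicit formula when $X$ is spectrally negative, I would invoke the Wiener--Hopf factorisation at an independent $\mathrm{Exp}(q)$-time $e_q$. With $\psi(\lambda) := \log \E[e^{\lambda X_1}]$ (finite for $\lambda \geq -\theta$ by hypothesis and for all $\lambda \geq 0$ by the absence of positive jumps) and $\Phi$ the inverse of $\psi|_{[0,\infty)}$, spectral negativity makes the ascending ladder height process a pure drift, so $\overline{X}_{e_q} \sim \mathrm{Exp}(\Phi(q))$; combined with $\E[e^{-\theta X_{e_q}}] = q/(q - \psi(-\theta))$ and $\E[e^{-\theta X_{e_q}}] = \E[e^{-\theta \overline{X}_{e_q}}]\,\E[e^{-\theta \underline{X}_{e_q}}]$ this yields $\E[e^{-\theta \underline{X}_{e_q}}] = \frac{q}{q-\psi(-\theta)}\cdot\frac{\Phi(q)+\theta}{\Phi(q)}$. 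Coupling the exponential times so that $e_q \uparrow \infty$ as $q \downarrow 0$ gives $\underline{X}_{e_q} \downarrow I$, and since $e^{-\theta \underline{X}_{e_q}}$ then increases, monotone convergence yields $\E[e^{-\theta I}] = \lim_{q \downarrow 0}\E[e^{-\theta \underline{X}_{e_q}}]$. Because $\E[X_1] = \psi'(0^+) \in (0,\infty)$ — finite since $X$ has no positive jumps, positive since $\psi(-\theta) < 0 = \psi(0)$ and $\psi$ is convex — one has $\Phi(0) = 0$ and $\Phi(q) \sim q/\E[X_1]$, so the limit evaluates to $\theta \E[X_1]/(-\psi(-\theta)) = \theta \E[X_1]/(-\log \E[e^{-\theta X_1}])$, which is \eqref{eq:exact spectrally negative}. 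The main obstacle throughout is the one flagged in (a): the exponent in the hypothesis is critical, so no plain maximal inequality closes the gap, and one must confine the heavy large jumps to the monotone compound Poisson piece — where the supremum is trivial — and exploit its independence from the light-tailed remainder; a secondary technical point is that the Wiener--Hopf identity for $\underline{X}_{e_q}$ is customarily recorded for nonnegative Laplace arguments but is used here at $-\theta < 0$, which is legitimate because $\E[e^{-\theta X_1}] < \infty$ and is justified by the same monotone-convergence passage.
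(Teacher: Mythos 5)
Your proof is correct. For both assertions of part (b) you essentially reproduce the paper's argument: the unit-block covering $\overline{Y}_\infty=\sup_k(Y_k+\xi_k)$ with the geometric series yields exactly the paper's bound $\E[e^{-\theta I}]\leq \E[e^{-\theta I_1}](1-\E[e^{-\theta X_1}])^{-1}$ (obtained there by iterating a perpetuity-type inequality; your version is marginally cleaner in that finiteness of $I$ falls out as a byproduct rather than being needed up front), and the spectrally negative identity is derived, as in the paper, from the Wiener--Hopf factorization at an independent exponential time, the exponential law of the supremum with parameter $\Phi(q)$, the limit $q/\Phi(q)\to\E[X_1]$, and monotone convergence as $q\downarrow 0$. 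The genuine divergence is in part (a): the paper settles the critical exponent in one stroke by quoting Willekens' concentration inequality $\Prob\{\sup_{0\leq t\leq 1}(-X_t)\geq u\}\,\Prob\{\inf_{0\leq t\leq 1}(-X_t)\geq -u_0\}\leq\Prob\{-X_1\geq u-u_0\}$, which transfers the tail of $X_1^-$ directly to the tail of the running supremum. You instead use the L\'evy--It\^o decomposition to peel off the large jumps of $-X$ into an independent nondecreasing compound Poisson piece, whose supremum on $[0,1]$ is its terminal value and whose exponential moment is inherited from that of $-X_1$ by independence, and you control the light-tailed remainder, which has all exponential moments, via Doob's $L^2$ maximal inequality applied at twice the exponent. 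Your route is longer but self-contained modulo standard facts, and your diagnosis of why a naive maximal inequality fails at the critical exponent (it would require an $L\log L$ condition) is exactly the obstruction the decomposition is designed to circumvent; the paper's route is shorter but outsources precisely this step to Willekens' inequality. Both arguments are valid.
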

\begin{proof}
(a) We first observe that $\E[e^{-\theta X_1}]<\infty$ entails $\E [e^{\theta X_1^-}]<\infty$.
Now use the following inequality due to Willekens \cite{Willekens:1987} 
\begin{equation*}
\Prob\{\sup\!_{0 \leq t \leq 1} (-X_t) \geq u\}\Prob\{\inf\!_{0 \leq t \leq 1}(-X_t) \geq -u_0\}    ~\leq~  \Prob\{-X_1\geq u-u_0\}
\end{equation*}
for $u_0\in (0,u)$ to conclude that
$\E[e^{-\theta I_1}] = \E[e^{\theta \sup\!_{0 \leq t \leq 1}(-X_t)}]<\infty$ follows from $\E [e^{\theta X_1^-}]<\infty$.

\noindent
(b) Note that $\E [e^{-\theta X_1}]<1$ for some $\theta>0$ entails $\E[X_1] \in (0,\infty]$
and thus $\lim_{t \to \infty} X_t=+\infty$ a.s. Hence, $I = \inf_{t\geq 0} X_t$ is a.s.\ finite and,
moreover, $\lim_{t \to \infty} \inf_{s \geq t} X_s = \lim_{t \to \infty} (X_t + I_t') = +\infty$ a.s.\ where
$I_t' = \inf_{s \geq t} (X_s - X_t)$.
Observe further that
\begin{equation}
\exp(-\theta I) ~\leq~ \exp(-\theta I_1) + \exp(-\theta X_1) \exp(-\theta I_1') \label{eq:perpetuity},
\end{equation}
Now write $I_{s:t} := \inf_{s \leq u \leq t} (X_u-X_s)$ and iterate \eqref{eq:perpetuity} $n$ times to obtain
\begin{equation*}
\exp(-\theta I) ~\leq~ \sum_{k=0}^{n-1} \exp(-\theta X_k) \exp(-\theta I_{k:k+1}) + \exp(-\theta X_{n}) \exp(-\theta I_n')  \label{eq:perpetuity nfold}.
\end{equation*}
Letting $n \to \infty$ and using that $\exp(-\theta X_{n}) \exp(-\theta I_n') = \exp(-\theta(X_n+I_n')) \to 0$ a.s.,
one infers
\begin{equation*}
\exp(-\theta I) ~\leq~ \sum_{k \geq 0} \exp(-\theta X_k) \exp(-\theta I_{k:k+1}).
\end{equation*}
$I_{k:k+1}$ is a copy of $I_1$ and independent of $X_k$ for each $k \in \N_0$ and since $\E[e^{-\theta I_1}] < \infty$ by part (a) of the lemma
we conclude that
$\E [e^{-\theta I}] \leq \E [e^{-\theta I_1}] (1-\E [e^{-\theta X_1}])^{-1}<\infty$.

The Wiener-Hopf factorization (Theorem 45.2 and Theorem 45.7 in \cite{Sato:1999})
is equivalent to the distributional equalities
\begin{equation*}
U_q ~\stackrel{\mathrm{d}}{=}~  V_q+W_q
\end{equation*}
for $q>0$ where $V_q$ and $W_q$ are independent,
$U_q$ has the same distribution as $X_{\tau}$ with $\tau$ denoting an exponential random variable with parameter $q$ independent of $X$,
$V_q$ has the same distribution as $S_{\tau}$ (with $S_t := \sup_{0 \leq s \leq t} X_s$) and $W_q$ has the same distribution as $I_{\tau}$.
We have $\E [e^{-\theta U_q}] = q(q-\log \varphi(\theta))$
and $\E [e^{-\theta V_q}] = \gamma^{\ast}(q)(\gamma^{\ast}(q)+\theta)^{-1}$ for all $\theta \geq 0$
where $\gamma^{\ast}$ is the inverse of $\theta \mapsto \log \varphi(-\theta)$.
The latter formula can be found in various sources, for instance, in the proof of Theorem
46.3 in \cite{Sato:1999}.
Consequently,
\begin{equation*}
\E [e^{-\theta W_q}]
~=~ \frac{q}{q-\log \varphi(\theta)} \frac{\gamma^{\ast}(q)+\theta}{\gamma^{\ast}(q)},
\quad \theta\geq 0.
\end{equation*}
Since $q/\gamma^{\ast}(q)\to \E [X_1]$ as $q \downarrow 0$,
the right-hand side tends to the right-hand side of \eqref{eq:exact spectrally negative}.
Applying the monotone convergence theorem twice
we conclude that
\begin{equation*}
\E [e^{-\theta W_q}]
~=~ \int_0^\infty e^{-u} \E [e^{-\theta I_{u/q}}] \, \du
~\to~   \E [e^{-\theta I}], \quad   q \to 0.
\end{equation*}
\end{proof}
\end{appendix}

\footnotesize
\bibliographystyle{abbrv}
\bibliography{Levy}

\begin{thebibliography}{10}

\bibitem{Alsmeyer:1991}
G.~Alsmeyer.
\newblock Some relations between harmonic renewal measures and certain first
  passage times.
\newblock {\em Stat. Prob. Lett.}, 12:19--27, 1991.

\bibitem{Alsmeyer+Iksanov+Meiners:2014}
G.~Alsmeyer, A.~Iksanov, and M.~Meiners.
\newblock Power and exponential moments of the number of visits and related
  quantities for perturbed random walks, 2014+.
\newblock To appear in {\em J. Theor. Probab.} DOI: 10.1007/s10959-012-0475-7.

\bibitem{Aurzada+Kramm+Savov:2012}
F.~Aurzada, T.~Kramm, and M.~Savov.
\newblock First passage times of {L}\'evy processes over a one-sided moving
  boundary, Jan. 2012.
\newblock Submitted. ArXiv:1201.1118.

\bibitem{Aurzada+Simon:2012}
F.~Aurzada and T.~Simon.
\newblock Persistence probabilities \& exponents, Mar. 2012.
\newblock To appear in: L\'evy Matters IV, Springer. arXiv:1203.6554.

\bibitem{Bertoin:1996}
J.~Bertoin.
\newblock {\em L\'evy processes}, volume 121 of {\em Cambridge Tracts in
  Mathematics}.
\newblock Cambridge University Press, Cambridge, 1996.

\bibitem{Bertoin+Harn+Steutel:1999}
J.~{Bertoin}, K.~{van Harn}, and F.~{Steutel}.
\newblock {Renewal theory and level passage by subordinators.}
\newblock {\em {Stat. Probab. Lett.}}, 45(1):65--69, 1999.

\bibitem{Bingham:1971}
N.~H. Bingham.
\newblock Limit theorems for occupation times of {M}arkov processes.
\newblock {\em Z. Wahrscheinlichkeitstheorie und Verw. Gebiete}, 17:1--22,
  1971.

\bibitem{Borodin+Salminen:1996}
A.~N. Borodin and P.~Salminen.
\newblock {\em Handbook of {B}rownian motion---facts and formulae}.
\newblock Probability and its Applications. Birkh\"auser Verlag, Basel, 1996.

\bibitem{Doney+Maller:2004}
R.~A. Doney and R.~A. Maller.
\newblock Moments of passage times for {L}\'evy processes.
\newblock {\em Ann. Inst. H. Poincar\'e Probab. Statist.}, 40(3):279--297,
  2004.

\bibitem{Esary+Proschan+Walkup:1967}
J.~D. Esary, F.~Proschan, and D.~W. Walkup.
\newblock Association of random variables, with applications.
\newblock {\em Ann. Math. Statist.}, 38:1466--1474, 1967.

\bibitem{Iksanov+Meiners:2010b}
A.~Iksanov and M.~Meiners.
\newblock Exponential moments of first passage times and related quantities for
  random walks.
\newblock {\em Electron. Commun. Probab.}, 15:365--375, 2010.

\bibitem{Iksanov+Meiners:2010a}
A.~Iksanov and M.~Meiners.
\newblock Exponential rate of almost-sure convergence of intrinsic martingales
  in supercritical branching random walks.
\newblock {\em J. Appl. Probab.}, 47(2):513--525, 2010.

\bibitem{Kyprianou:2014}
A.~E. Kyprianou.
\newblock {\em Fluctuations of {L}\'evy Processes with Applications}.
\newblock Universitext. Springer-Verlag, Berlin, second edition, 2014.
\newblock Stochastic Modelling and Applied Probability.

\bibitem{Sato:1999}
K.-i. Sato.
\newblock {\em L\'evy processes and infinitely divisible distributions},
  volume~68 of {\em Cambridge Studies in Advanced Mathematics}.
\newblock Cambridge University Press, Cambridge, 1999.
\newblock Translated from the 1990 Japanese original, revised by the author.

\bibitem{Sato+Watanabe:2004}
K.-i. Sato and T.~Watanabe.
\newblock Moments of last exit times for {L}\'evy processes.
\newblock {\em Ann. Inst. H. Poincar\'e Probab. Statist.}, 40(2):207--225,
  2004.

\bibitem{Widder:1946}
D.~V. Widder.
\newblock {\em The {L}aplace {T}ransform}.
\newblock Princeton Mathematical Series, v. 6. Princeton University Press,
  Princeton, N. J., 1941.

\bibitem{Willekens:1987}
E.~Willekens.
\newblock On the supremum of an infinitely divisible process.
\newblock {\em Stochastic Process. Appl.}, 26(1):173--175, 1987.

\end{thebibliography}

\noindent
\textsc{Frank Aurzada   \\
Fachbereich Mathematik  \\
Technische Universit\"at Darmstadt  \\
Darmstadt, Germany  \\
Email: aurzada@mathematik.tu-darmstadt.de}  \\

\noindent
\textsc{Alexander Iksanov   \\
Faculty of Cybernetics  \\
National T.\ Shevchenko University of Kyiv,\\
01601 Kyiv, Ukraine,    \\
Email: iksan@univ.kiev.ua   }   \\

\noindent
\textsc{Matthias Meiners    \\
Fachbereich Mathematik  \\
Technische Universit\"at Darmstadt  \\
Darmstadt, Germany  \\
Email: meiners@mathematik.tu-darmstadt.de}
\end{document}